\documentclass[11pt]{article}

\usepackage{listings}
\usepackage{epsfig}
\usepackage{lscape}
\usepackage{longtable}
\usepackage{amsmath,amssymb,amsthm}
\usepackage{graphicx}
\usepackage{color}
\usepackage{placeins}
\usepackage{url}
\usepackage{cases}
\usepackage{hyperref}
\usepackage{algorithm}  

\oddsidemargin 0pt
\evensidemargin 0pt
\marginparwidth 10pt
\marginparsep 10pt
\topmargin -20pt
\textwidth 6.5in
\textheight 8.5in
\parindent = 20pt

\DeclareMathOperator*{\argmin}{argmin}

\def\approxleq{ \kern3pt \mbox{\raisebox{.6ex}{$<$}} \kern-8pt
  \mbox{\raisebox{-.6ex}{$\sim$}} \kern5pt}
\def\mc{\multicolumn}
\def\norm#1{\|#1 \|}
\def\inprod#1#2{\langle#1,\,#2 \rangle}

   \def\cK{{\cal K}}
   \def\cT{{\cal T}}
\def\cH{{\cal H}}     \def\cO{{\cal O}}
 \def\cM{{\cal M}} \def\cN{{\cal N}} 
\def\cS{{\cal S}}     \def\cV{{\cal V}}
\def\cI{{\cal I}}   
 \def\cN{{\cal N}}

\newlength{\len}
\setlength{\len}{\textwidth}
\addtolength{\len}{-2\fboxsep}
\addtolength{\len}{-2\fboxrule}

\newtheorem{theorem}{Theorem}
\newtheorem{proposition}{Proposition}
\newtheorem{lemma}{Lemma}
\newtheorem{corollary}{Corollary}
\newtheorem{remark}{Remark}

\newtheorem{definition}{Definition}

\begin{document}

\title{\bf Fast projection onto the ordered weighted $\ell_1$ norm ball}

\author{Qinzhen Li\thanks{School of Data Science, Fudan University, Shanghai, China ({\tt 18210980031@fudan.edu.cn}).}, \;
	 Xudong Li\thanks{School of Data Science, Fudan University, Shanghai, China ({\tt lixudong@fudan.edu.cn}); Shanghai Center for Mathematical Sciences, Fudan University, Shanghai, China.}
}

\date{\today}
\maketitle

\begin{abstract}
	In this paper, we provide a finitely terminated yet efficient approach to compute the Euclidean projection onto the ordered weighted $\ell_1$ (OWL1) norm ball. In particular, an efficient semismooth Newton method is proposed for solving the dual of a reformulation of the original projection problem. Global and local quadratic convergence results, as well as the finite termination property, of the algorithm are proved. Numerical comparisons with the two best-known methods demonstrate the efficiency of our method. In addition, we derive the generalized Jacobian of the studied projector which, we believe, is crucial for the future designing of fast second order nonsmooth methods for solving general OWL1 norm constrained problems.
\end{abstract}
\noindent
\textbf{Keywords:}
Euclidean projector, ordered weighted $\ell_1$ norm ball,  HS-Jacobian, semismooth Newton method

\medskip
\noindent
\textbf{AMS subject classifications:}
90C06, 90C20, 90C25
\section{Introduction}\label{intro}
Given a nonzero vector $\lambda \in \Re^n$ satisfying $\lambda_1 \ge \lambda_2 \ge \ldots \ge \lambda_n \ge 0$, the ordered weighted $\ell_1$ (OWL1) norm $\kappa_\lambda$ is defined as
\[ \kappa_{\lambda}(x) = \inprod{|x|^{\downarrow}}{\lambda} = \sum_{i=1}^n \lambda_i |x|_i^{\downarrow}, \quad \forall \, x\in\Re^n, \]
where $|x|\in\Re^n$ denotes the vector whose $i$th entry is $|x_{i}|$ and for any given vector $z\in\Re^n$, $z^{\downarrow}$ denotes
the vector of entries of $z$ being arranged in the non-increasing order $z_1^\downarrow \ge \cdots \ge z_n^\downarrow$. {It can be shown that $\kappa_{\lambda}$ is a norm function if only if $\lambda_1 >0$ (see \cite[Exercise IV.1.19]{Bhatia1997Matrix}).} In fact, both the $\ell_1$ norm and the $\ell_\infty$ norm
can be obtained from $\kappa_{\lambda}$ via choosing different weighted vectors $\lambda$.
Given $\tau >0$, define the $\tau$ level-set of $\kappa_{\lambda}$ as $\Delta_{\tau}:= \left\{x\in\Re^n \mid \kappa_{\lambda}(x) \le \tau  \right\}$.
In this paper, we focus on the problem of projecting a vector $b$  onto $\Delta_{\lambda}$, i.e., solving the following optimization problem: 
\begin{equation}  \label{prob:proj}
\Pi_{\Delta_{\tau}}(b):= \argmin_x \left\{\frac{1}{2} \norm{x - b}^2 \mid \kappa_{\lambda}(x) \le \tau  \right\}. 
\end{equation}
Obviously, if $\kappa_{\lambda}(b)\le \tau$, we have trivially that $\Pi_{\Delta_{\tau}}(b) = b$. To avoid this special case, throughout this paper, we make the blank assumption that   
 $\kappa_{\lambda}(b) > \tau$.

Recently, the OWL1 norm function $\kappa_{\lambda}$ receives significant academic attention in both optimization and statistical communities. Coined as the weighted Ky Fan norm function, $\kappa_{\lambda}$ appeared in the optimization literature \cite{Wu2014on} where the first and second order properties of the Moreau–Yosida regularization of $\kappa_{\lambda}$ were carefully studied. In \cite{Wu2014on}, the authors also emphasized the important roles played by the class of Ky Fan norm functions in matrix optimization problems.  Later, as a generalization of the octagonal
shrinkage and clustering algorithm for regression (OSCAR) regularizer, the OWL1 norm was studied as a group sparsity promoted regularizer in \cite{Zeng2014Decreasing}. Meanwhile, an OWL1 norm regularized least squares model, termed as the sorted L-one penalized estimation (SLOPE), was proposed in \cite{Bogdan2015Slope}. The authors showed that the SLOPE estimator enjoys nice statistical properties. In particular, it has a controllable false discovery rate, a measure of type-I error for multiple testing. Recently, Su et al. in \cite{Su2016Slope} further argued that under certain mild assumptions, the SLOPE estimator achieves asymptotic minimaxity over
large sparsity classes for Gaussian designs.  Hence, as a fundamental subroutine for solving the OWL1 norm constrained estimator which is a provable statistically well-behaved estimator, the computation of the projector $\Pi_{\Delta_{\tau}}$ in \eqref{prob:proj} deserves more research efforts.

As an important regularizer, the computation of the proximal mapping of $\kappa_{\lambda}$, defined as follows
\[
{\rm Prox}_{\kappa_{\lambda}}(b) := \argmin_x \left\{
\kappa_{\lambda}(x) + \frac{1}{2}\norm{x - b}^2
\right\}, \quad\forall\, b\in \Re^n,
\]
was studied in  \cite{Zeng2014Decreasing,Bogdan2015Slope,Zeng2014ordered}. These studies revealed that the computation of ${\rm Prox}_{\kappa_{\lambda}}$ can be reduced to the Euclidean projection problem over the monotone nonnegative cone\footnote{See Subsection \ref{subsec:reform} for the definition.} and can be efficiently solved by the famous pool adjacent violators algorithm (PAVA) \cite{Kruskal1964Nonmetric,Barlow1972statistical,Best1990active}. With this effective subroutine, the accelerated proximal gradient method \cite{Nesterov1983method} was used to solve the SLOPE model in \cite{Bogdan2015Slope}. Recently, the computational efficiency for the SLOPE model, especially for large-scale difficult problems, was greatly enhanced by Luo et al. \cite{Luo2019Solving} in which the authors carefully exploited the special low-rank and sparsity structures in the generalized Jacobian of ${\rm Prox}_{\kappa_{\lambda}}$ and designed a highly efficient sparse semismooth Newton based augmented Lagrangian method. 

Unlike the proximal mapping ${\rm Prox}_{\kappa_{\lambda}}$, the Euclidean projector $\Pi_{\Delta_{\tau}}$ is less understood. In fact, given the discussions in the literature of classic level-set methods \cite{Aravkin2019level}, one could argue that the computations of ${\rm Prox}_{\kappa_{\lambda}}$ and $\Pi_{\Delta_{\tau}}$ are mathematically equivalent.
However, as far as we know, there is no closed-form expression
in the literature to explicitly characterize this equivalence.
Nevertheless, this equivalence can be used to design algorithms to compute $\Pi_{\Delta_{\tau}}$ from ${\rm Prox}_{\kappa_{\lambda}}$. For example, in \cite{Zeng2014ordered}, the authors advocated a root-finding iterative scheme based on the equivalence.
Later, Davis in \cite{damek2015algorithm} noticed that the root-finding scheme can not obtain $\Pi_{\Delta_{\tau}}$ in a finite number of steps and proposed a finitely terminated approach for computing the projector. Partitioning non-decreasing entries of the preprocessed input into groups, Davis' approach conducts a series of tests, and partial sortings and averagings. However, as shown in Section \ref{sec:numerical}, it can be time-consuming in solving high dimensional problems. We also note that neither \cite{Zeng2014ordered} and \cite{damek2015algorithm} studied the generalized Jacobian of $\Pi_{\Delta_{\tau}}$, which can be essential for the algorithmic design for solving general OWL1 norm constrained optimization problems as demonstrated in \cite{Li2018efficiently,Li2020efficient,Luo2019Solving,Lin2019efficient,Zhang2020efficient}.

In this paper, we aim to design a finitely terminated yet efficient algorithm for large-scale projection problem \eqref{prob:proj}.  Our proposed algorithm starts with an observation that problem \eqref{prob:proj} can be reduced to the problem of finding the projector onto the intersection of the monotone nonnegative cone and an affine subspace. Building upon the HS-Jacobian \cite{Han1997Newton, Li2020efficient} of the projector onto the monotone nonnegative cone, we are able to design an efficient semismooth Newton ({\sc Ssn}) method to solve the dual of the reduced problem.
The global and local quadratic convergence results, as well as the finite termination properties, of the {\sc Ssn} are provided. Numerical experiments demonstrate that our algorithm is faster than the root-finding approach and outperforms Davis' approach by a large margin. In addition to these, we conduct a careful first order variational analysis on $\Pi_{\Delta_{\tau}}$ and construct its generalized Jacobian which lays the foundation for the subsequent designing of second order nonsmooth methods for solving general OWL1 norm constrained problems.

The remaining parts of this paper are organized as follows. In the next section, we review some definitions and results which will be used later for designing and analyzing our algorithm. In Section \ref{sec:dualssn}, we recast the original problem as the problem of projecting onto the intersection of the monotone nonnegative cone and the affine subspace and derive the dual of the reformulated problem. 
Then, a semismooth Newton method is proposed for this dual problem. Theoretical convergence results, as well as the finite termination property, of our algorithm are also provided in this section. 
Section \ref{sec:GJacobianPiDeltatau} is dedicated to the generalized Jacobian of $\Pi_{\Delta_{\tau}}$.  In Section \ref{sec:numerical},
we conduct numerical experiments to evaluate the performance of our algorithms against the root-finding and Davis' approaches. We conclude our paper in the last section.

Before moving to the next section, we list here some notation to be used later in this paper. For any given vector $y \in \Re^n$, we use ${\rm Diag}(y)$ to denote the diagonal matrix whose $i$th diagonal element is $y_i$, and $e_n$ is used to denote vectors of all ones in $\Re^n$. We denote by $O_n$ and $I_n$ the $n\times n$ zero matrix and identity matrix, respectively. For any given matrix $A \in \Re^{m\times n}$, we denote by $\rm Null(A)$ the null space of $A$.

\section{Preliminaries}
We briefly recall in this section some basic definitions and results associated with the set-valued mappings and the semismoothness of vector-valued functions.

Let $\cK:\Re^n \rightrightarrows\Re^n$ be a set-valued mapping. The domain of $\cK$, denoted ${\rm dom}\,\cK$, is the set ${\rm dom}\, \cK \equiv \left\{
x\in\Re^n \mid \cK(x) \neq \emptyset
\right\}$. The set-valued mapping $\cK$ is said to be nonempty valued at a given point $p$ if $p\in {\rm dom}\, \cK$. We also recall the following definitions associated with $\cK$. 

\begin{definition}[{\cite[Definition 2.1.16]{facchinei2007finite}}]
	\label{def:setvalued}
A set-valued mapping $\cK:\Re^n \rightrightarrows \Re^n$ is said to be
\begin{enumerate}
	\item closed at point $\bar x$ if there are sequences $\{x^k\}$ and $\{y^k\}\subseteq \Re^n$ such that
	$x^k\to \bar x$ as $k\to \infty$ and for all $k\ge 0$, $y^k\in \cK(x^k)$ and $y^k \to \bar y$ as $k\to \infty$, then $\bar y\in \cK(\bar x)$;
	\item bounded at point $\bar x$ if $\cK(\bar x)$ is bounded;
	\item upper semicontinuous at point $\bar x$ if for every open set $\cV$ containing $\cK(\bar x)$, there exists an open neighborhood $\cN$ of $\bar x$ such that, for each $x\in \cN$, $\cV$ contains $\cK(x)$;
	\item upper semicontinuous on a set $\Omega \subseteq \Re^n$ if 
	$\cK$ is  upper semicontinuous at every point in $\Omega$.
\end{enumerate}
\end{definition}

\begin{definition}[B-subdifferential and Clarke generalized Jacobian \cite{Clarke1983optimization}]
Let $F:\Re^n \to \Re^n$ be a locally Lipschitz continuous
function. Define the B-subdifferential of $F$ at point $x\in \Re^n$ to be 
\[
\partial_B F(x) = \left\{ V\in \Re^{n\times n} \mid V = \lim_{x^k \to x} JF(x^k), \, x^k\in \Omega_F \right\},
\]
where $\Omega_F: = \left\{ x\in\Re^n \mid \mbox{$F$ is differentiable at $x$} \right\}$ and define the Clarke generalized Jacobian of $F$ at point $x$ as 
\[\partial F(x) = {\rm conv}\partial_B F(x),\]
i.e., the convex hull of $\partial_B F(x)$.
\end{definition}
By \cite[Propositions 7.1.4 and 7.4.11]{facchinei2007finite}, we see that both the Clarke generalized Jacobian $\partial F$ and the B-subdifferential $\partial_B F$ are nonempty and compact valued, upper-semicontinuous set-valued mappings.

Inspired by the definitions given in \cite{mifflin1977semismooth,kummer1988newton,qi1993nonsmooth,sun2002semismooth}, we can state the following definition of semismooth functions with respect to any given nonempty and compact valued, upper-semicontinuous set-valued mapping. In fact, this definition has been used in recent papers for solving various high-dimensional statistical optimization problems \cite{Li2018efficiently,Lin2019efficient,Zhang2020efficient}.

\begin{definition}[Semismoothness]
	\label{def:semismooth} {Let $\cO \subseteq \Re^n$ be an open set,
		$\cK:\cO \subseteq \Re^n \rightrightarrows\Re^{m\times n}$  be a nonempty and  compact valued,} upper-semicontinuous set-valued mapping and $F:\cO \rightarrow \Re^m$ be a locally Lipschitz continuous function. $F$ is said to be semismooth at $x\in \cO$ with respect to the multifunction $\cK$ if $F$ is directionally differentiable at $x$ and
	for any {$V \in  \cK(x + \Delta x)$ with $\Delta x\rightarrow 0$,}
	\[F(x+\Delta x) - F(x) - V\Delta x = o(\norm{\Delta x}).\]
	{Let $\gamma$ be a positive constant. $F$ is said to be $\gamma$-order (strongly, if $\gamma =1$) semismooth at $x$ with respect to $\cK$ if $F$ is   directionally differentiable at $x$ and for any  $V \in  \cK(x + \Delta x)$ with $\Delta x\rightarrow 0$,
		\[F(x+\Delta x) - F(x) - V\Delta x = O(\norm{\Delta x}^{1+\gamma}).\]
		$F$ is said to be a  semismooth (respectively, $\gamma$-order semismooth, strongly semismooth) function on $\cO$ with respect to  $\cK$
		if it is semismooth (respectively, $\gamma$-order semismooth,  strongly semismooth) everywhere in $\cO$ with respect to $\cK$.}
\end{definition}
Semismooth functions and the corresponding algorithms have been extensively studied and used in the nonsmooth optimization/equation community. Indeed, it is well known that continuous piecewise affine functions and twice continuously differentiable functions are all strongly semismooth everywhere. We refer readers to \cite{facchinei2007finite} for more examples of semismooth functions.
The following second order limit result is in fact an extension of \cite[Theorem 2.1]{pang1995globally} and has been proved and used in \cite{Li2018efficiently}. We will also use it to prove the convergence of our algorithm.
\begin{proposition}\label{prop:taylor2}
	Let $\theta:\Omega\to \Re$ be a continuously differentiable function and its gradient $\nabla \theta:\Omega \to \Re^n$ is locally Lipschitz {on the open set $\Omega$.} If $\nabla\theta$ is semismooth at a point $x\in\Omega$ with respect to {a nonempty, compact valued and upper-semicontinuous multifunction $\cK: \Omega \rightrightarrows\cS^n$,} then for any $V\in\cK(x+d)$ with
	$d\to 0$, {we have}
	\[\theta(x+d) - \theta(x) - \inprod{\nabla \theta(x)}{d} - \frac{1}{2}\inprod{d}{Vd} = o(\norm{d}^2).\]
\end{proposition}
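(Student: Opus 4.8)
The plan is to combine the integral (mean value) representation coming from the $C^1$ structure of $\theta$ with the first order expansion of the semismooth map $\nabla\theta$, routing every comparison through the directional derivative rather than through the endpoint element $V\in\cK(x+d)$ directly. First I would use that $\theta$ is continuously differentiable and invoke the fundamental theorem of calculus to write
\[
\theta(x+d)-\theta(x)-\inprod{\nabla\theta(x)}{d}=\int_0^1\inprod{\nabla\theta(x+td)-\nabla\theta(x)}{d}\,dt=\int_0^1\phi(t)\,dt,
\]
where $\phi(t):=\inprod{\nabla\theta(x+td)-\nabla\theta(x)}{d}$. The whole task is then to evaluate this integral to second order and to match it with $\tfrac12\inprod{d}{Vd}$.

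The technical heart of the argument, and the step I expect to be the main obstacle, is to upgrade the pointwise directional differentiability of $\nabla\theta$ (which is built into Definition \ref{def:semismooth}) into a uniform first order expansion. Writing $\psi:=(\nabla\theta)'(x;d)$ for the directional derivative, I would establish the Bouligand-type estimate
\begin{equation*}
\nabla\theta(x+\Delta x)-\nabla\theta(x)-(\nabla\theta)'(x;\Delta x)=o(\norm{\Delta x}),\qquad \Delta x\to0. \tag{$\star$}
\end{equation*}
For a locally Lipschitz and directionally differentiable map this holds because local Lipschitz continuity promotes ordinary directional differentiability to the Hadamard sense: if $(\star)$ failed one could extract $d^k\to0$ with $d^k/\norm{d^k}\to u$, and the Lipschitz bound $\norm{\nabla\theta(x+\norm{d^k}u^k)-\nabla\theta(x+\norm{d^k}u)}=o(\norm{d^k})$, together with positive homogeneity and the Lipschitz continuity of $(\nabla\theta)'(x;\cdot)$, would reduce the defect to the one along the fixed ray $u$, which vanishes by ordinary directional differentiability, a contradiction. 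Since $\nabla\theta$ is Lipschitz and $\cK$ is locally bounded (by upper semicontinuity and compact-valuedness), both $(\star)$ and the semismoothness estimate of Definition \ref{def:semismooth} can be recast with uniform moduli: there are nondecreasing functions $\eta,\varepsilon$ with $\eta(s),\varepsilon(s)\to0$ as $s\downarrow0$ such that $\norm{\nabla\theta(x+\Delta x)-\nabla\theta(x)-(\nabla\theta)'(x;\Delta x)}\le\eta(\norm{\Delta x})\norm{\Delta x}$ and $\norm{\nabla\theta(x+\Delta x)-\nabla\theta(x)-W\Delta x}\le\varepsilon(\norm{\Delta x})\norm{\Delta x}$ for every $W\in\cK(x+\Delta x)$.

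With $(\star)$ in uniform form the rest is bookkeeping. Taking $\Delta x=td$, using positive homogeneity $(\nabla\theta)'(x;td)=t\psi$ and monotonicity of $\eta$, one gets $\phi(t)=t\inprod{\psi}{d}+\sigma(t,d)$ with $|\sigma(t,d)|\le\eta(\norm{d})\,t\norm{d}^2$, so that $\int_0^1\phi(t)\,dt=\tfrac12\inprod{\psi}{d}+o(\norm{d}^2)$. On the other hand, applying the semismoothness estimate at the endpoint $\Delta x=d$ and then $(\star)$ yields $Vd=\bigl(\nabla\theta(x+d)-\nabla\theta(x)\bigr)+o(\norm{d})=\psi+o(\norm{d})$ for every $V\in\cK(x+d)$, with error controlled by $\varepsilon(\norm{d})+\eta(\norm{d})$ and hence uniform over $V\in\cK(x+d)$; consequently $\tfrac12\inprod{d}{Vd}=\tfrac12\inprod{\psi}{d}+o(\norm{d}^2)$. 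Subtracting the two displays, the common term $\tfrac12\inprod{\psi}{d}$ cancels and I obtain $\theta(x+d)-\theta(x)-\inprod{\nabla\theta(x)}{d}-\tfrac12\inprod{d}{Vd}=o(\norm{d}^2)$, as claimed. The delicate points to monitor are the finiteness and vanishing of the moduli $\eta$ and $\varepsilon$, and the uniformity in $t\in[0,1]$ of the remainder inside the integral; everything else is elementary once $(\star)$ is in hand.
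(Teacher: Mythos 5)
Your argument is correct and is essentially the standard proof of this expansion: the paper itself does not reproduce a proof but defers to \cite[Theorem 2.1]{pang1995globally} and \cite{Li2018efficiently}, whose argument is exactly your combination of the Newton--Leibniz integral representation, the upgrade of directional differentiability to a Bouligand (uniform first order) expansion via local Lipschitz continuity, and the identification $Vd=(\nabla\theta)'(x;d)+o(\norm{d})$ from semismoothness. The delicate points you flag (uniformity of the moduli in the direction and over $V\in\cK(x+d)$) are handled correctly, so nothing is missing.
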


\section{A dual semismooth Newton algorithm for solving (\ref{prob:proj})}
\label{sec:dualssn}
In this section, by some well-known reduction techniques, we are able to represent the OWL1 norm constraint in a more explicit form and to transform the original problem \eqref{prob:proj} into a handy reformulation. Then, we propose a semismooth Newton method to efficiently solve the reformulated projection problem.

\subsection{A reformulation of problem (\ref{prob:proj})}
\label{subsec:reform}
We focus first on the reformulation of the original problem \eqref{prob:proj}. 
Let ${\bf \Pi}_{\bf n}^{\bf s}$ be the set of  all signed permutation matrices in $\Re^{n\times n}$. For any given vector $u\in \Re^n$, denote by
\begin{equation}
\label{eq:Pmatrix}
\Pi^s(u):=\left\{ P\in {\bf \Pi}_{\bf n}^{\bf s} \mid Pu = |u|^{\downarrow} \right\},
\end{equation}
the set of signed permutation matrices associated with $u$.
Let $B\in \Re^{n\times n}$ be the matrix defined by $Bx = [x_1 - x_2, x_2 - x_3, \ldots, x_{n-1} - x_n, x_n]^T$, for all $x\in\Re^n$ and define the monotone nonnegative cone as
\[C:=\{x\in\Re^n\mid x_1\ge\cdots\ge x_n\ge 0 \} = \{ x\in \Re^n \mid Bx \ge 0\}.\] 
Given $\lambda \in C$ and $\tau >0$, define $C_{\lambda}^\tau := \left\{ x\in \Re^n \mid \inprod{\lambda}{x} = \tau, \, x\in C \right\}$ and 
\begin{equation}
\label{prob:newp}
\Pi_{C_{\lambda}^\tau}(b):= \argmin_x\left\{
\frac{1}{2}\norm{x - w}^2 \mid x\in C_{\lambda}^\tau
\right\}, \quad b\in\Re^n.
\end{equation}
The following proposition states that the original projection problem \eqref{prob:proj} can be reformulated in a more explicit form via involving the monotone nonnegative cone $C$.
\begin{proposition}
\label{prop:reform}
For any given $b\in \Re^n$, suppose that $\kappa_{\lambda}(b) > \tau$. Then, it holds that 
\[
\Pi_{\Delta_{\tau}}(b) = P^T \Pi_{C_{\lambda}^\tau}(Pb), \quad \forall\, P\in \Pi^s(b).
\] 
\end{proposition}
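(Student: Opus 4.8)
The plan is to exploit two symmetries of the problem — the invariance of the OWL1 norm under signed permutations and the invariance of the Euclidean distance under orthogonal maps — together with a rearrangement argument, and to reduce the claim to showing that the projection of a sorted nonnegative vector onto $\Delta_{\tau}$ already lies in the monotone nonnegative cone $C$.

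First I would record the basic equivariance. Since $\kappa_{\lambda}(x)=\inprod{|x|^\downarrow}{\lambda}$ depends on $x$ only through $|x|^\downarrow$, and any signed permutation $P$ leaves $|x|^\downarrow$ unchanged, we have $\kappa_{\lambda}(Px)=\kappa_{\lambda}(x)$; hence $P\Delta_{\tau}=\Delta_{\tau}$. Because $P$ is orthogonal, the substitution $x=Pz$ in the defining minimization of $\Pi_{\Delta_{\tau}}(Pb)$ shows $\Pi_{\Delta_{\tau}}(Pb)=P\,\Pi_{\Delta_{\tau}}(b)$, so that $\Pi_{\Delta_{\tau}}(b)=P^{T}\Pi_{\Delta_{\tau}}(Pb)$ for every $P\in\Pi^s(b)$. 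Fixing such a $P$ and writing $y:=Pb=|b|^\downarrow$, it then remains only to prove $\Pi_{\Delta_{\tau}}(y)=x_{\lambda}(y)$, i.e., that the projection of the sorted nonnegative vector $y$ coincides with the solution of the reformulated problem \eqref{prob:newp}.

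The core of the argument is to show that $x^{\ast}:=\Pi_{\Delta_{\tau}}(y)$ belongs to $C$ and activates the constraint, and I would use uniqueness of the projection onto the closed convex set $\Delta_{\tau}$ (convex and closed because $\kappa_{\lambda}$ is a norm) twice. For the sign part, set $\tilde{x}=|x^{\ast}|$ entrywise; feasibility is preserved since $\kappa_{\lambda}(\tilde{x})=\kappa_{\lambda}(x^{\ast})\le\tau$, and as $y\ge 0$ one has $(|x^{\ast}_i|-y_i)^2\le(x^{\ast}_i-y_i)^2$ for every $i$, so $\norm{\tilde{x}-y}\le\norm{x^{\ast}-y}$ and uniqueness forces $x^{\ast}=\tilde{x}\ge 0$. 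For the ordering part, let $(x^{\ast})^\downarrow$ be the non-increasing rearrangement; again $\kappa_{\lambda}$ is unchanged, and since $y$ is already sorted the rearrangement inequality gives $\inprod{(x^{\ast})^\downarrow}{y}\ge\inprod{x^{\ast}}{y}$, whence $\norm{(x^{\ast})^\downarrow-y}\le\norm{x^{\ast}-y}$ and uniqueness forces $x^{\ast}=(x^{\ast})^\downarrow$. Together these give $x^{\ast}\in C$. Finally, the blank assumption $\kappa_{\lambda}(b)>\tau$ yields $\kappa_{\lambda}(y)>\tau$, so $y\notin\Delta_{\tau}$ and $x^{\ast}$ lies on the boundary $\{\kappa_{\lambda}=\tau\}$; since $x^{\ast}\in C$ implies $\kappa_{\lambda}(x^{\ast})=\inprod{\lambda}{x^{\ast}}$, the active constraint reads $\inprod{\lambda}{x^{\ast}}=\tau$.

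To close, I would observe that the feasible set $S:=\{x\in C\mid\inprod{\lambda}{x}=\tau\}$ of \eqref{prob:newp} satisfies $S\subseteq\Delta_{\tau}$, and that $x^{\ast}\in S$ by the previous step; since $x^{\ast}$ minimizes $\frac12\norm{x-y}^2$ over the larger set $\Delta_{\tau}$, it a fortiori minimizes it over $S$, so $x^{\ast}=x_{\lambda}(y)$ and the proposition follows. I expect the main obstacle to be the reduction establishing $x^{\ast}\in C$: the two exchange arguments are where the structure of $\kappa_{\lambda}$ and the sortedness of $y$ are genuinely used, and care is needed to invoke uniqueness of the projection to upgrade the distance inequalities into equalities of the minimizers.
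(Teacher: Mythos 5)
Your proposal is correct and follows essentially the same route as the paper's Appendix proof: a signed-permutation equivariance lemma reducing to the sorted nonnegative input, followed by showing the projection lies in $C$ with the linear constraint active, and a final comparison with $x_{\lambda}$. The only (cosmetic) difference is that you establish membership in $C$ via a single global absolute-value step and the rearrangement inequality, whereas the paper uses entrywise sign flips and pairwise swaps; both hinge on the same uniqueness-of-projection argument.
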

Note that Proposition \ref{prop:reform} can be proved via simple reduction techniques. In fact, although presented in different ways, it has been proved in \cite{Zeng2014ordered,damek2015algorithm}. For the compactness of the current paper, we provide a simple proof for this proposition in the Appendix.

Proposition \ref{prop:reform} implies that we can, without loss of generality, assume in problem \eqref{prob:newp} that $b\in C$.
As one can observe later, our algorithm and analysis in fact works for problem \eqref{prob:newp} with a general vector $b\in\Re^n$.
In order to design efficient algorithms for solving problem \eqref{prob:newp}, we first study its constraints system, i.e., 
\begin{equation}
\label{eq:cons}
\inprod{\lambda}{x} = \tau, \quad x\in C.
\end{equation}
A simple observation shows that the Slater's condition holds, i.e.,
there exists $\hat x \in {\rm int}(C)$ such that
$\inprod{\lambda}{\hat x} = \tau$. For example, one can set $\hat x_i = \tau/\norm{\lambda}_1$ for $i=1,\ldots,n$.
The following nondegeneracy condition associated with \eqref{eq:cons} stems from the seminar works of Robinson \cite{Robinson1984local,Robinson1987local,Robinson2003constraint} and has been extensively studied for semidefinite programming problems \cite{Sun2006strong,Chan2008constraint}. It can be regarded as a generalization of the classic linear independent constraint qualification (LICQ) \cite{Robinson1984local,Shapiro2003sensitivity}.
\begin{definition}
	\label{def:nondegen}
	We say that a feasible solution $\tilde x$ to system \eqref{eq:cons} is constraint nondegenerate if
	\begin{equation}
	\label{eq:nondegen}
	\lambda^T {\rm lin}\cT_C(\tilde x) = \Re,
	\end{equation}
	where $\cT_C(\tilde x)$ denotes the tangent cone of $C$ at point $\tilde x$ and ${\rm lin}\cT_C(\tilde x)$ represents the lineality space of $\cT_C(\tilde x)$.
\end{definition}

\begin{proposition}
	\label{prop:nondegen}
	It holds that any feasible solution to problem \eqref{prob:newp} is constraint nondegenerate in the sense of Definition \ref{def:nondegen}.
\end{proposition}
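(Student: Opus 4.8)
The plan is to reduce the nondegeneracy condition to the existence of a single witness direction and then build that direction from the block structure of $\tilde x$. Since the linear map $d\mapsto \lambda^T d$ takes values in the one-dimensional space $\Re$, the quantity $\lambda^T{\rm lin}\cT_C(\tilde x)$ is a subspace of $\Re$, hence either $\{0\}$ or all of $\Re$. Consequently the condition $\lambda^T{\rm lin}\cT_C(\tilde x)=\Re$ is equivalent to the existence of some $d\in{\rm lin}\cT_C(\tilde x)$ with $\inprod{\lambda}{d}\neq 0$, and the whole proof amounts to exhibiting one such direction for an arbitrary feasible $\tilde x$.

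First I would make the lineality space explicit using the polyhedral description $C=\{x\mid Bx\ge 0\}$. At a feasible $\tilde x$ the tangent cone is $\cT_C(\tilde x)=\{d\mid (Bd)_i\ge 0,\ i\in I(\tilde x)\}$, where $I(\tilde x)=\{i\mid (B\tilde x)_i=0\}$ is the active index set, so its lineality space is ${\rm lin}\cT_C(\tilde x)=\cT_C(\tilde x)\cap(-\cT_C(\tilde x))=\{d\mid (Bd)_i=0,\ i\in I(\tilde x)\}$. Reading off the rows of $B$, an index $i<n$ is active exactly when $\tilde x_i=\tilde x_{i+1}$, while the last index is active exactly when $\tilde x_n=0$. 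Hence ${\rm lin}\cT_C(\tilde x)$ consists precisely of those $d$ that are constant on each maximal block of equal consecutive entries of $\tilde x$ and satisfy $d_n=0$ when $\tilde x_n=0$.

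Next I would partition $\{1,\dots,n\}$ into the maximal blocks $G_1,\dots,G_k$ on which $\tilde x$ is constant, with strictly decreasing values across blocks, and set $\Lambda_j=\sum_{i\in G_j}\lambda_i$. For any block $G_j$ that is not forbidden by the constraint $d_n=0$, the direction $d$ equal to $1$ on the indices in $G_j$ and $0$ elsewhere lies in ${\rm lin}\cT_C(\tilde x)$ and satisfies $\inprod{\lambda}{d}=\Lambda_j$. Since $\lambda\in C$ has nonnegative entries, every $\Lambda_j\ge 0$, so it suffices to produce one admissible block with $\Lambda_j>0$.

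The crux is the case analysis according to whether $\tilde x_n>0$ or $\tilde x_n=0$. If $\tilde x_n>0$, every block is admissible, and since $\lambda\neq 0$ we have $\sum_{j=1}^k\Lambda_j=\norm{\lambda}_1>0$, so some $\Lambda_j>0$ and we are done. The delicate case is $\tilde x_n=0$: then the last block $G_k$ is forbidden, and I must locate an admissible $j<k$ with $\Lambda_j>0$. Suppose, for contradiction, that $\Lambda_1=\dots=\Lambda_{k-1}=0$; by nonnegativity this forces $\lambda_i=0$ for every $i\in G_1\cup\dots\cup G_{k-1}$, so that all nonzero entries of $\lambda$ lie in $G_k$, where $\tilde x\equiv 0$. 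Then $\inprod{\lambda}{\tilde x}=0$, contradicting the feasibility requirement $\inprod{\lambda}{\tilde x}=\tau>0$. Hence some $\Lambda_j>0$ with $j<k$, the corresponding block indicator is the desired witness direction, and the nondegeneracy condition holds. I expect this final contradiction—using $\tau>0$ to rule out the alignment of the support of $\lambda$ with the zero block of $\tilde x$—to be the only genuinely nontrivial point, everything else being a direct reading of the polyhedral structure.
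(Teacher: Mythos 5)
Your proof is correct. It follows the same overall strategy as the paper's --- write ${\rm lin}\,\cT_C(\tilde x)$ explicitly as ${\rm Null}(B_{\cI(\tilde x)})$ and show that $\lambda$ is not orthogonal to this subspace --- but the two arguments locate the witness differently. The paper phrases the key step as full row rank of the stacked matrix $[\lambda^T;\,B_{\cI(\tilde x)}]$ and refutes $\lambda=\sum_{i\in\cI(\tilde x)}\mu_i B_{i,:}^T$ by pairing against the single test vector $\tilde e$, the indicator of $\{1,\dots,i_0\}$ with $i_0$ the first inactive index; this is precisely the indicator of your leading block $G_1$, and positivity of $\inprod{\lambda}{\tilde e}$ comes from the monotone ordering of $\lambda$ (which forces $\lambda_1>0$, and $G_1$ can never be the zero block since $\tilde x\neq 0$). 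You instead sweep over all admissible block indicators and, in the only delicate case $\tilde x_n=0$, derive positivity of some $\Lambda_j$ with $j<k$ from the feasibility identity $\inprod{\lambda}{\tilde x}=\tau>0$. The paper's choice yields a one-line positivity check but leans on the sortedness of $\lambda$; yours needs the extra case analysis but uses only $\lambda\ge 0$, $\lambda\neq 0$ and $\tau>0$, so it is marginally more general and, incidentally, cleaner than the paper's write-up (whose claim that $\inprod{\tilde e}{B_{i,:}^T}=0$ ``for all $i\in[n]$'' should read ``for all $i\in\cI(\tilde x)$''). Both arguments are complete and correct.
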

\begin{proof}
	For any feasible solution $\tilde x$ to \eqref{prob:newp}, it holds that
	\[
	\cT_C(\tilde x) = \{d\in \Re^n \mid (Bd)_i \ge 0, \, i \in \cI(\tilde x) \},
	\]
	where $\cI(\tilde x)$ is the set of active indices given as
	$\cI(\tilde x) = \{i\in \{1,\dots,n\}\mid (B\tilde x)_i = 0\}$. Thus, simple calculations show that
	\[
	{\rm lin} \cT_C(\tilde x) = \{d\in \Re^n \mid (Bd)_i = 0, i\in \cI(\tilde x)\} = {\rm Null}(B_{\cI(\tilde x)}),
	\]
	where $B_{\cI(\tilde x)}$ is the submatrix obtained by extracting rows of $B$ with indices in $\cI(\tilde x)$. We only consider the nontrivial case where $ \cI(\tilde x)\neq \emptyset$. 
	Since $\tilde x \neq 0$, we know that $ \cI(\tilde x) \neq [n]$. 
	
	Next, we show that 
	matrix $\begin{bmatrix}
	\lambda^T \\
	B_{\cI(\tilde x)}
	\end{bmatrix}$ is of full row rank.	
	Suppose, on the contrary, that there exists a nonzero vector $\mu \in\Re^{|\cI(\tilde x)|}$ such that $\lambda =\sum_{i\in I} B_{i,:}^T \mu_i$.
	Denote $i_0 := \inf\{i\in [n] \mid i\not\in \cI(\tilde x)  \}$, i.e., the smallest index which is not in $\cI(\tilde x)$.
	Let $\tilde e \in \Re^n$, be the vector whose first $i_0$ entries are all ones with the rest entries being zeros.
	Then, we have 
	\[ 0 < \sum_{i=1}^{n-1} \lambda_i = \inprod{\tilde e}{\lambda}
	= \sum_{i\in \cI(\tilde x)} \mu_i \inprod{\tilde e}{B_{i,:}^T} = 0,   \]
	where the last equality follows from the fact that 
	$\inprod{\tilde e}{B_{i,:}^T} = 0$ for all $i\in [n]$.
	Hence, we arrive at a contradiction.
	
	Now, the above full row rankness implies that, for any $u\in\Re$, there exists $d\in\Re^n$ such that
	\[\lambda^T d = u, \quad B_{\cI(\tilde x)}d = 0,\]
	i.e., $\lambda^T {\rm lin}\cT_C(\tilde x) = \Re$. The proof is thus completed.	
\end{proof}
\begin{remark}
	\label{remk:licq}
	As can be observed from the proof of Proposition \ref{prop:nondegen}, the nondegeneracy condition \eqref{eq:nondegen} is in fact equivalent to the classic linear independent constraint qualification associated with the following system
	\[
	\inprod{\lambda}{x} = \tau, \quad Bx \ge 0.
	\]
\end{remark}

\subsection{A dual semismooth Newton method for (\ref{prob:newp})}
We note that problem \eqref{prob:newp} concerns about computing the Euclidean projector onto the intersection of an affine subspace and a closed convex cone. Various algorithms, including the dual quasi-Newton method \cite{Malic2004dual}, the dual semismooth Newton method \cite{Qi2006qudratically} and the alternating
projections method with Dykstra’s correction \cite{Dykstra1983algorithm,Higham2002computing}, have been proposed for computing the projector. Extensive numerical comparisons in \cite{Qi2006qudratically} show that the dual semismooth Newton approach outperforms others significantly. 
Hence, in this subsection, we focus on designing a semismooth Newton method for solving the dual of \eqref{prob:newp} and study its convergence properties. 

The dual of problem \eqref{prob:newp} in the minimization form can be written as follows:
\begin{equation}
\label{eq:d}
\min_y \left\{\phi(y):= \frac{1}{2}\norm{\Pi_C(y \lambda + b)}^2 - y\tau -\frac{1}{2}\norm{b}^2 \right\}.
\end{equation}
Since the Slater's condition associated with problem \eqref{prob:newp} is satisfied, we know that $\phi$ is coercive, i.e., $\phi(y)\to +\infty$ as $|y|\to +\infty$ \cite{Rockafellar1974conjugate}.
Hence, we know that the solution set $\Omega_{D} \subseteq \Re^n$ of the dual problem \eqref{eq:d} is nonempty. Meanwhile, {since $C$ is a closed convex cone, we know that $\norm{\Pi_{C}(\cdot)}^2$ is a continuously differentiable function (see for example \cite[Theorem 31.5]{rockafellar1970convex}).}
Then, it is not difficult to see that $\phi$ is continuously differentiable over $\Re$ with
\begin{equation}  
\label{eq:dphi}
\phi'(y) = \inprod{\Pi_{C}(y\lambda+b)}{\lambda} - \tau. 
\end{equation}
Therefore, the optimal solution set $\Omega_{D}$ of problem \eqref{eq:d} is exactly the solution set of
the following univariate nonlinear nonsmooth equation
\begin{equation} \label{prob:first}
\phi'(y) = 0, \quad y\in\Re.
\end{equation}
Let $y^*\in \Omega_D$ be an solution to \eqref{prob:first}, i.e, an optimal solution to problem \eqref{eq:d}. Then, we know from the optimality condition that $\Pi_{C}(y^*\lambda + b)$ is the unique optimal solution to problem \eqref{prob:newp}. Since $C$ is a polyhedral convex set, by \cite[Proposition 4.1.4]{facchinei2007finite}, we know that $\phi'$ is in fact a Lipschitz continuous piecewise affine function on $\Re$.
Hence, $\phi'$ is semismooth on $\Re$ with respect to the classic Clarke generalized Jacobian $\partial \phi'$  and a semismooth Newton method can be used for efficiently solving the above equation. 

\subsubsection{Computations of $\Pi_C$ and its generalized Jacobian}
\label{subsec:hspic}
In order to use the semismooth Newton method to solve \eqref{prob:first}, we first need to compute $\Pi_C$ efficiently. For any $d\in\Re^n$, recall from the definition of $C$ that
\begin{equation}
\label{prob:projC}
\Pi_{C}(d): = \argmin_x \left\{\frac{1}{2}\norm{x - d}^2 \mid Bx \ge 0 \right\}.
\end{equation}
Due to the special structure of $B$, problem \eqref{prob:projC} has been extensively studied in the optimization and statistical literature. Indeed, problem \eqref{prob:projC} is closely related to the so-called isotonic regression problem and can be solved by the famous pool adjacent violators algorithm (PAVA) \cite{Kruskal1964Nonmetric,Barlow1972statistical,Best1990active}. In this paper, we use a highly efficient implementation of the PAVA provided in \cite{Bogdan2015Slope}.

Next, we shall study the first order variational properties of $\Pi_C$, i.e., obtaining certain generalized Jacobian associated with $\Pi_{C}$ which will be used in the semismooth Newton method.
As the Euclidean projector over a polyhedral convex set, the first order variation properties of $\Pi_C$ have been intensively studied in \cite{Haraux1977how,Pang1990Newton,Pang1996Piecewise,Han1997Newton,Li2020efficient}. It is noted in the above references that the computation of generalized Jacobian associated with a Euclidean projector over a general polyhedral can be numerically detrimental. In \cite{Han1997Newton}, a more tractable replacement of the generalized Jacobian was proposed and was termed as HS-Jacobian in \cite{Li2020efficient} where the authors also derived an explicit formula and efficient approaches for constructing a special HS-Jacobian. 
Since then, the concept of HS-Jacobian has been widely used in the algorithmic design for solving many high-dimensional machine learning problems, for example, the fused lasso problems \cite{Li2018efficiently}, the SLOPE models \cite{Luo2019Solving} and the clustered lasso problems \cite{Lin2019efficient}. The following definition of the HS-Jacobian associated with $\Pi_C$
is based on the study conducted in \cite{Han1997Newton,Li2020efficient}.
Consider the set-valued mapping $\cH:\Re^n \rightrightarrows \Re^{n\times n}$ defined by
\begin{equation} 
\label{eq:gJacobianCH}
\cH(d) := \left\{
H \in \Re^{n\times n} \mid H = I_n - B_{\Gamma}^T(B_{\Gamma}B_{\Gamma}^T)^{-1} B_{\Gamma}, \, \Gamma\in \cK(d)
\right\},
\end{equation}
where $\cK(d):=\{\Gamma\subseteq\{1,\ldots,n\}\mid {\rm Supp}(z(d)) \subseteq \Gamma\subseteq  \cI(\Pi_{C}(d)) \}$. Here, $z(d):= (B^T)^{-1}(d - \Pi_{C}(d))$ is the optimal solution to the dual of the projection problem; ${\rm Supp}(z(d))$ denotes the support of $z(d)$, which is the set of indices $i$ such that $z(d)_i \neq 0$, and 
$$\cI(\Pi_{C}(d)) := \left\{
i\in\{1,\ldots n\}\mid (B\Pi_{C}(d))_i = 0
\right\}$$ 
is the set of active indices at $\Pi_{C}(d)$; $B_\Gamma$ is the submatrix obtained by extracting the rows of $B$ with indices in $\Gamma$ with the convention that ${B_{\Gamma}^T(B_{\Gamma}B_{\Gamma}^T)^{-1} B_{\Gamma}} = 0\in\Re^{n\times n}$ if $\Gamma = \emptyset$. For any given nonempty index set $\Gamma \subseteq \{1,\ldots, n\}$, it is not difficult to observe that $B_\Gamma$ is of full row rank and $B_\Gamma B_\Gamma^T$ is invertible. Hence, the set-valued mapping $\cH$ is well-defined over $\Re^n$. Here, we term $\cH(d)$ as the HS-Jacobian associated with $\Pi_C$ at point $d$. Meanwhile, the mapping $\cH$ has the following desirable property.  

\begin{lemma}\label{lem:partialproj}
	For any $d \in \Re^n$, there exists a neighborhood $W$ of $d$ such that for all $d' \in W$
	\begin{equation}
	\label{eq:le_cH1}
	\cK(d') \subseteq \cK(d), \quad \cH(d') \subseteq \cH(d).
	\end{equation}
	When $\cH(d') \subseteq \cH(d)$, it holds that
	\begin{equation}\label{eq:le_cH2}
	\Pi_{C}(d')-\Pi_{C}(d)-H(d'-d)=0, \quad \forall\, H \in \cH(d'). 
	\end{equation}
	In fact, $\cH(d) \equiv \partial_B\Pi_{C}(d)$, i.e., $\cH(d)$ is exactly the B-subdifferential of $\Pi_C$ at point $d$.
\end{lemma}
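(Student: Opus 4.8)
The whole lemma will hinge on one structural identity: for any $d\in\Re^n$ and any $\Gamma\in\cK(d)$,
\[ \Pi_C(d)=H_\Gamma\,d, \qquad H_\Gamma:=I_n-B_\Gamma^T(B_\Gamma B_\Gamma^T)^{-1}B_\Gamma, \]
where $H_\Gamma$ is exactly the orthogonal projector onto ${\rm Null}(B_\Gamma)$. Indeed, $\Gamma\subseteq\cI(\Pi_C(d))$ gives $B_\Gamma\Pi_C(d)=0$, so $\Pi_C(d)\in{\rm Null}(B_\Gamma)$, while ${\rm Supp}(z(d))\subseteq\Gamma$ together with $d-\Pi_C(d)=B^Tz(d)$ gives $d-\Pi_C(d)=B_\Gamma^T z(d)_\Gamma\in\Range(B_\Gamma^T)={\rm Null}(B_\Gamma)^\perp$. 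Thus $\Pi_C(d)$ is the orthogonal projection of $d$ onto ${\rm Null}(B_\Gamma)$, which is the displayed identity. I would establish this first, since it renders both \eqref{eq:le_cH2} and half of the claim $\cH(d)=\partial_B\Pi_C(d)$ nearly automatic.

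For the inclusions \eqref{eq:le_cH1} I would argue by continuity. Both $\Pi_C$ (nonexpansive) and $z(\cdot)=(B^T)^{-1}(\,\cdot\,-\Pi_C(\cdot))$ are continuous, so the active set is upper semicontinuous and the support is lower semicontinuous: if $(B\Pi_C(d))_i>0$ then this strict inequality persists on a neighborhood, giving $\cI(\Pi_C(d'))\subseteq\cI(\Pi_C(d))$, while if $z(d)_i\neq0$ then $z(d')_i\neq0$ nearby, giving ${\rm Supp}(z(d))\subseteq{\rm Supp}(z(d'))$. Chaining these through the defining sandwich ${\rm Supp}(z(d'))\subseteq\Gamma'\subseteq\cI(\Pi_C(d'))$ of any $\Gamma'\in\cK(d')$ yields $\Gamma'\in\cK(d)$, hence $\cK(d')\subseteq\cK(d)$ on some neighborhood $W$, and therefore $\cH(d')\subseteq\cH(d)$. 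Equation \eqref{eq:le_cH2} then drops out: for $H=H_\Gamma\in\cH(d')$ with $\Gamma\in\cK(d')$, the structural identity at $d'$ gives $\Pi_C(d')=H_\Gamma d'$, and since $\Gamma\in\cK(d')\subseteq\cK(d)$ the same identity at $d$ gives $\Pi_C(d)=H_\Gamma d$; subtracting yields $\Pi_C(d')-\Pi_C(d)=H_\Gamma(d'-d)$.

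It remains to prove $\cH(d)=\partial_B\Pi_C(d)$, which I would split into two inclusions. For $\partial_B\Pi_C(d)\subseteq\cH(d)$: at any point $d'$ where $\Pi_C$ is differentiable, the directional derivative $\Pi_C'(d';\cdot)$ is the projection onto the critical cone, which is linear only when that cone is a subspace, i.e.\ only under strict complementarity ${\rm Supp}(z(d'))=\cI(\Pi_C(d'))$; hence $\cK(d')$ is a singleton $\{\Gamma(d')\}$, and by \eqref{eq:le_cH1} together with the structural identity $\Pi_C$ agrees with the linear map $H_{\Gamma(d')}$ near $d'$, so $J\Pi_C(d')=H_{\Gamma(d')}$. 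For a sequence of such points $d^k\to d$ with $J\Pi_C(d^k)\to V$, the nesting $\Gamma(d^k)\in\cK(d^k)\subseteq\cK(d)$ and the finiteness of $\cK(d)$ let me extract a subsequence with $\Gamma(d^k)\equiv\Gamma$, giving $V=H_\Gamma\in\cH(d)$.

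For the reverse inclusion I must, for each fixed $\Gamma\in\cK(d)$, manufacture differentiability points approaching $d$ whose Jacobian is $H_\Gamma$. Using the invertibility of $B$, I would choose $\hat x^k\to\Pi_C(d)$ with active set exactly $\Gamma$ (prescribe $B\hat x^k$ to vanish on $\Gamma$ and be strictly positive off $\Gamma$) and choose $d^k-\hat x^k$ in the relative interior of the normal cone $N_C(\hat x^k)$ converging to $d-\Pi_C(d)$, which is possible because $d-\Pi_C(d)=B_\Gamma^T z(d)_\Gamma$ already lies in that cone. Then the optimality system holds at $\hat x^k$ with a strictly complementary multiplier supported exactly on $\Gamma$, so $\Pi_C(d^k)=\hat x^k$, $\cK(d^k)=\{\Gamma\}$, $J\Pi_C(d^k)=H_\Gamma$, and hence $H_\Gamma\in\partial_B\Pi_C(d)$. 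I expect this last construction to be the main obstacle, because it is precisely where the possible degeneracy of $d$—active constraints carrying a zero multiplier, ${\rm Supp}(z(d))\subsetneq\Gamma\subsetneq\cI(\Pi_C(d))$—must be undone by perturbing the primal point and the multiplier simultaneously to restore strict complementarity while keeping $d^k\to d$.
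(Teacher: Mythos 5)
Your proof is correct, but it is a genuinely different (self-contained) route from the paper's, which disposes of the whole lemma by citation: \eqref{eq:le_cH1}--\eqref{eq:le_cH2} are quoted from Han--Sun \cite[Lemma 2.1]{Han1997Newton}, and the identification $\cH(d)=\partial_B\Pi_C(d)$ is quoted from \cite{Sun1997computable} and \cite{Outrata1995numerical}, using only the observation that LICQ holds at $\Pi_C(d)$. You instead prove everything from scratch, organized around the structural identity $\Pi_C(d)=H_\Gamma d$ for every $\Gamma\in\cK(d)$, which you correctly derive from the sandwich ${\rm Supp}(z(d))\subseteq\Gamma\subseteq\cI(\Pi_C(d))$ and the orthogonality $\Range(B_\Gamma^T)={\rm Null}(B_\Gamma)^\perp$; the inclusions \eqref{eq:le_cH1} then follow from continuity of $\Pi_C$ and of $z(\cdot)$ (this is exactly the content of the cited Han--Sun lemma, specialized to the case where $B$ is invertible so the dual multiplier is unique), and \eqref{eq:le_cH2} follows by subtracting two instances of the identity. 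Your two-inclusion argument for $\cH(d)=\partial_B\Pi_C(d)$ is also sound: differentiability of $\Pi_C$ forces the critical cone to be a subspace, which by the full row rank of every row-submatrix of the invertible $B$ is equivalent to strict complementarity, giving $\partial_B\Pi_C(d)\subseteq\cH(d)$; and your perturbation $(\hat x^k,\mu^k)$ with active set exactly $\Gamma$ and strictly positive multiplier on $\Gamma$ correctly manufactures, for each $\Gamma\in\cK(d)$, nearby points where $\Pi_C$ is affine with Jacobian $H_\Gamma$, which is essentially the argument behind the results the paper cites. What your approach buys is transparency and independence from the three external references (and it makes explicit the identity $\Pi_C(d)=Hd$, which the paper uses implicitly elsewhere); what it costs is length, and the final "restore strict complementarity" construction, which you flag as the main obstacle, does need to be written out with the explicit choice of $B\hat x^k$ and $\mu^k$ to be complete.
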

\begin{proof}
	The first part of the lemma (\eqref{eq:le_cH1} and \eqref{eq:le_cH2}) follows directly from \cite[Lemma 2.1]{Han1997Newton}. Since the LICQ associated with problem \eqref{prob:projC} holds at $\Pi_{C}(d)$, the last statement holds from \cite[Section 2]{Sun1997computable} and \cite[Theorem 3.2 and Corollary 3.2.2]{Outrata1995numerical}.
\end{proof}

Given $d\in\Re^n$, although the formula of the HS-Jacobian $H\in \cH(d)$ defined in \eqref{eq:gJacobianCH} seems complicated, we show in the following discussion that $H$ in fact enjoys a very simple yet elegant representation which is largely inherited  from the special structure of $B$. 
The discussions below are based on \cite[Proposition 6]{Li2018efficiently} and \cite[Section 3.4]{Luo2019Solving}. For any given nonempty index set $\Gamma \subseteq \{1,\ldots,n\}$, define the diagonal matrix $\Sigma_{\Gamma}\in \Re^{n \times n}$ by
\[(\Sigma_{\Gamma})_{ii}\;=\;\begin{cases}
1, &{\rm if}\; i \in \Gamma,\\
0,&{\rm otherwise}.
\end{cases} \]
We note that $\Sigma_{\Gamma}$ can be partitioned into $N$ blocks so that the diagonal elements of these submatrices are all identity matrices or zero matrices and any two consecutive blocks are different in their types: 
\begin{equation}
\label{eq:Sigam}
\Sigma_{\Gamma} = {\rm Diag}(\Lambda_1,\cdots,\Lambda_N),
\end{equation}
where $\Lambda_j \in \{O_{n_i},I_{n_i}\}$, and $\sum_{i=1}^N n_i = n$. Let $H = I_n-B_{\Gamma}^T(B_{\Gamma}B_{\Gamma}^T)^{-1}B_{\Gamma}$, i.e., the orthogonal projection onto the null space of $B_{\Gamma}$. 
Then, by some direct calculations, we see that $H$ is a nonnegative block diagonal matrix, i.e., 
\begin{align*}
H = {\rm Diag}(H_1,\cdots,H_N),
\end{align*}
where \begin{equation}
\label{eq:Hi}
 H_i = \begin{cases}
\frac{1}{n_i+1} e_{n_i+1}e_{n_i+1}^T, &{\rm if}\; i \in J\;{\rm and}\;i\;\neq\;N,\\
O_{n_i},&{\rm if}\; i \in J \;{\rm and}\; i\;= N,\\
I_{n_i-1},&{\rm if}\; i \notin J \;{\rm and}\; i\;\neq\; 1,\\
I_{n_i},&{\rm if}\; i \notin J \;{\rm and}\; i\;=\;1
\end{cases}
\end{equation}
with $J:=\{j \in \{1,\cdots,N\} \mid \Lambda_j = I_{n_j}\}$ and the convention $I_0 = \emptyset$. In fact, we can further decompose $H$ into the sum of a diagonal matrix and a low rank matrix, i.e., $H = D + UU^T$, where $D = {\rm Diag}(D_1,\dots,D_N)$ with
\[D_i\;=\;\begin{cases}
O_{n_i+1}, &\quad {\rm if}\; i \in J \; {\rm and} \; i\;\neq\;N,\\
O_{n_i}, &\quad {\rm if}\; i \in J \;{\rm and} \; i\;=\;N,\\
I_{n_i-1},&\quad {\rm if}\; i \notin J \;{\rm and} \; i\;\neq\; 1,\\
I_{n_i},&\quad {\rm if}\; i \notin J \;{\rm and} \; i\;=\;1,
\end{cases}\]
and $U \in \Re^{n\times N}$ with its $(k,j)^{th}$ entry given by
\[U_{k,j}=\begin{cases}
\frac{1}{\sqrt{n_j+1}}, &\quad {\rm if}\; \sum_{t=1}^{j-1}n_t+1\leq k\leq\sum_{t=1}^{j}n_t+1 \;{\rm and} \; j\in J\backslash\{N\},\\
0,&\quad {\rm otherwise}.
\end{cases}
\]
\begin{lemma}
	\label{lem:h11p}
For any $d\in\Re^n$ satisfying $\Pi_C(d)\neq 0$, it holds that
\[
H_{11} > 0, \quad \forall\, H\in \cH(d).
\]
\end{lemma}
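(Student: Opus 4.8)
The plan is to read $H_{11}$ directly off the explicit block-diagonal form of $H$ established just above the lemma, and to isolate the single configuration in which it could vanish. Fix $d$ with $\Pi_C(d)\neq 0$ and an arbitrary $H\in\cH(d)$, so that $H=I_n-B_\Gamma^T(B_\Gamma B_\Gamma^T)^{-1}B_\Gamma$ for some $\Gamma\in\cK(d)$. Since $H={\rm Diag}(H_1,\dots,H_N)$, the entry $H_{11}$ is exactly the top-left entry of the first block $H_1$, so I would simply specialize the four cases in \eqref{eq:Hi} to $i=1$ and check the sign of $(H_1)_{11}$ in each.

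First I would dispose of the cases that are immediately positive. If $1\notin J$, then the fourth branch of \eqref{eq:Hi} applies (it is the branch with $i=1$), giving $H_1=I_{n_1}$ and hence $H_{11}=1>0$. If $1\in J$ and $1\neq N$, the first branch applies, giving $H_1=\frac{1}{n_1+1}e_{n_1+1}e_{n_1+1}^T$ and hence $H_{11}=\frac{1}{n_1+1}>0$. The only remaining possibility is the second branch with $i=1$, namely $1\in J$ and $1=N$; since the first block is the $N$th block only when $N=1$, this forces $\Lambda_1=I_{n_1}$ with $n_1=n$, so that $H_1=O_{n}$ and $H_{11}=0$.

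The crux is then to rule out this last configuration under the hypothesis $\Pi_C(d)\neq 0$. Having $N=1$ with $\Lambda_1=I_{n_1}$ means $\Sigma_\Gamma=I_n$, i.e.\ $\Gamma=\{1,\dots,n\}$. But $\Gamma\in\cK(d)$ requires $\Gamma\subseteq\cI(\Pi_C(d))$, so $\Gamma=\{1,\dots,n\}$ forces $\cI(\Pi_C(d))=\{1,\dots,n\}$, which by the definition of the active set means $B\Pi_C(d)=0$. As $B$ is invertible, this yields $\Pi_C(d)=0$, contradicting the hypothesis. Hence the vanishing branch cannot occur, and $H_{11}>0$ in every admissible case.

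I do not expect a genuine obstacle once the block formula \eqref{eq:Hi} is in hand: the argument is a finite case check, and its only nontrivial input is the invertibility of $B$ used to translate $\Gamma=\{1,\dots,n\}$ into $\Pi_C(d)=0$. The single point meriting care is the bookkeeping of $i=1$ versus $i=N$ in \eqref{eq:Hi}, ensuring that the zero block $O_{n_i}$ can appear as the first block \emph{only} when $N=1$, so that the reduction to $\Sigma_\Gamma=I_n$ is correct.
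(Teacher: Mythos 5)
Your proof is correct and follows essentially the same route as the paper's: both reduce to showing $\Gamma\neq\{1,\ldots,n\}$ (so that the all-ones single-block configuration $N=1$, $\Lambda_1=I_n$ cannot occur) and then read $H_{11}>0$ off the explicit block formula \eqref{eq:Hi}. Your case bookkeeping for the first block, including the observation that the zero block can sit first only when $N=1$, matches the paper's argument that $\Gamma\neq\emptyset$ forces $N>1$.
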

\begin{proof}
Since $\Pi_C(d)\neq 0$, we know that $\cI(\Pi_C(d)) \neq \left\{1,\ldots,n\right\}$. 
Hence, it holds that $\Gamma \neq \left\{1,\ldots, n \right\}$ for all $\Gamma \in \cK(d)$. If $\Gamma = \emptyset$, we have that $H = I$ and $H_{11} = 1$. If $\Gamma \neq \emptyset$,
we know that $N > 1$ with $N$ being the number of blocks in \eqref{eq:Sigam}. The desired result then follows directly from  \eqref{eq:Hi}. 
\end{proof}
\subsubsection{A semismooth Newton method for (\ref{prob:first}) and its convergence}
From Lemma \ref{lem:partialproj}, we obtain the B-subdifferential of $\Pi_C$. However, it is still a nontrivial task for computing the B-subdifferential or the Clarke generalized Jacobian of $\phi'$ defined in \eqref{eq:dphi}. Here, we propose to circumvent this difficulty by designing a set-valued mapping which can be regarded as an alternative surrogate of $\partial \phi'$. Given a nonzero vector $\lambda \in C$ and $b\in\Re^n$, define the set-valued mapping $\cM:\Re \rightrightarrows \Re$ by
\begin{equation}
\label{eq:cM}
\cM(y) : = \left\{ M\in \Re\mid 
M = \lambda^T H \lambda, \, H\in \cH(y\lambda + b) 
\right\},
\end{equation}
where $\cH$ is the set-valued mapping defined in \eqref{eq:gJacobianCH}. We show in the next proposition that the Lipschitz continuous piecewise affine function $\phi'$ is $\gamma$-order semismooth on $\Re$ with respect to $\cM$ for any given $\gamma >0$ in the sense of Definition \ref{def:semismooth} and $\cM$ is a legitimate replacement of the Clarke generalized Jacobian of $\phi'$.
\begin{proposition}
	\label{prop:cM}
Let $b\in\Re^n$, $y\in\Re$ and a nonzero vector $\lambda\in C$ be any given data. Then, $\cM$, defined in \eqref{eq:cM}, is a nonempty and  compact valued and upper-semicontinuous set-valued mapping and
$M\ge 0$ for all $M\in \cM(y)$. Moreover, there exists a neighborhood $W$ of $y$ such that for all $y'\in W$,
\begin{equation} \label{eq:semismoothphip}
{	\cM(y')\subseteq \cM(y)\quad {\rm and} \quad \phi'(y')- \phi'(y) - M (y' - y) = 0, \quad \forall\, M \in \cM(y').}
\end{equation}
Therefore, $\partial_B\phi'(y) \subseteq \cM(y)$.
\end{proposition}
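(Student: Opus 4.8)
The plan is to establish the assertions in turn---nonemptiness, compact‑valuedness, the sign condition $M\ge 0$, the local identity \eqref{eq:semismoothphip} (with upper‑semicontinuity as a by‑product), and finally the inclusion $\partial_B\phi'(y)\subseteq\cM(y)$---all anchored at the point $d:=y\lambda+b$ and transported from the vector variable $d$ to the scalar variable $y$ through the affine map $y'\mapsto y'\lambda+b$. The workhorse throughout will be Lemma \ref{lem:partialproj}, which records the local behavior of $\cH$ and $\Pi_{C}$ near $d$.

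First I would dispose of the structural properties. Each $H\in\cH(d)$ is, by \eqref{eq:gJacobianCH}, the orthogonal projector onto ${\rm Null}(B_{\Gamma})$, hence symmetric and idempotent and therefore positive semidefinite, so that $M=\lambda^T H\lambda=\norm{H\lambda}^2\ge 0$; this gives the sign condition. Since $\cK(d)$ is a family of subsets of $\{1,\dots,n\}$ it is finite, whence $\cH(d)$ is finite and $\cM(y)$, being its image under the map $H\mapsto\lambda^T H\lambda$, is a nonempty finite set---in particular compact.

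Next I would prove the local identity \eqref{eq:semismoothphip}, from which upper‑semicontinuity follows at once. Applying Lemma \ref{lem:partialproj} at $d$ furnishes a neighborhood $W_d$ of $d$ on which $\cH(d')\subseteq\cH(d)$ and $\Pi_{C}(d')-\Pi_{C}(d)=H(d'-d)$ for every $H\in\cH(d')$; pulling $W_d$ back through the continuous map $y'\mapsto y'\lambda+b$ yields a neighborhood $W$ of $y$. For $y'\in W$ with $d'=y'\lambda+b$, the inclusion $\cH(d')\subseteq\cH(d)$ gives $\cM(y')\subseteq\cM(y)$, and using $\phi'(y')-\phi'(y)=\inprod{\Pi_{C}(d')-\Pi_{C}(d)}{\lambda}$ from \eqref{eq:dphi} together with $d'-d=(y'-y)\lambda$, I would compute, for any $M=\lambda^T H\lambda\in\cM(y')$, that $\phi'(y')-\phi'(y)=\inprod{H(d'-d)}{\lambda}=(y'-y)\lambda^T H\lambda=(y'-y)M$. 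The containment $\cM(y')\subseteq\cM(y)\subseteq\cV$, valid for $y'\in W$ and every open $\cV\supseteq\cM(y)$, then yields upper‑semicontinuity at $y$ directly from the definition.

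The main obstacle is the inclusion $\partial_B\phi'(y)\subseteq\cM(y)$, which is delicate because $\phi'$ is a scalar function that only \emph{sees} $\Pi_{C}$ along the single direction $\lambda$, so it may be differentiable at parameters $y^k$ at which $\Pi_{C}(y^k\lambda+b)$ is not; a naive chain rule is therefore unavailable. Instead I would argue as follows. Take any $V\in\partial_B\phi'(y)$, so $V=\lim_k(\phi')'(y^k)$ with $\phi'$ differentiable at $y^k\to y$. Being Lipschitz piecewise affine, $\phi'$ is affine with slope $a_k:=(\phi')'(y^k)$ on a neighborhood of $y^k$; combining this with the already‑proved identity \eqref{eq:semismoothphip} applied at base point $y^k$ forces $(y''-y^k)(M-a_k)=0$ for all $M\in\cM(y'')$ and all $y''$ near $y^k$, so that $\cM(y'')=\{a_k\}$ for $y''\neq y^k$ close to $y^k$. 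The containment $\cM(y'')\subseteq\cM(y^k)$ then places $a_k\in\cM(y^k)$, and for large $k$ the neighborhood $W$ from the previous step gives $\cM(y^k)\subseteq\cM(y)$, whence $a_k\in\cM(y)$. Since $\cM(y)$ is finite and hence closed, the limit $V=\lim_k a_k$ lies in $\cM(y)$, completing the proof.
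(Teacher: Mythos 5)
Your proof is correct and follows essentially the same route as the paper's: both rest on Lemma \ref{lem:partialproj} to obtain the local containment $\cM(y')\subseteq\cM(y)$ and the identity \eqref{eq:semismoothphip} via the computation $\phi'(y')-\phi'(y)=\inprod{H(d'-d)}{\lambda}=\lambda^T H\lambda\,(y'-y)$, from which upper semicontinuity and the remaining claims follow. The only difference is that you spell out the final inclusion $\partial_B\phi'(y)\subseteq\cM(y)$ in full (using that a univariate piecewise affine $\phi'$ is affine near any point of differentiability, plus closedness of the finite set $\cM(y)$), whereas the paper asserts this step in one line from \eqref{eq:semismoothphip} and the definition of the B-subdifferential.
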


\begin{proof}
	By Lemma \ref{lem:partialproj}, we know that $\cM$ is a nonempty and  compact valued and upper-semicontinuous set-valued mapping. 
	From the definitions of $\cH$ and $\cM$ in \eqref{eq:gJacobianCH} and \eqref{eq:cM}, it is easy to see that $M\ge 0$ for all $M\in \cM(y)$. Meanwhile, for any given $y\in\Re$, from Lemma \ref{lem:partialproj}, we know that there exists a neighborhood
	$W$ of $y$ such that for all $y'\in W$, $\cH(y'\lambda + b)\subseteq \cH(y\lambda + b)$ and 
\begin{equation}
\label{eq:pic}
\Pi_C{(y'\lambda + b)} - \Pi_C(y\lambda + b) - P\lambda(y'-y) =0, \quad \forall\, P\in \cH(y'\lambda + b).
\end{equation}
Hence, we know from \eqref{eq:cM} that $\cM(y') \subseteq \cM(y)$ for all $y'\in W$. Let $y'$ be a fixed but arbitrary vector in $W$. Then, for any $M\in \cM(y')$, there exists $P\in \cH(y'\lambda + b)$ such that $M = \lambda^T P \lambda$ and 
\begin{align*}
\phi'(y') - \phi'(y) = {}& \inprod{\Pi_C(y'\lambda + b) - \Pi_C(y\lambda + b)}{\lambda} 
= \lambda^T P \lambda (y'-y) = M(y'-y), 
\end{align*}
where the second equation follows from \eqref{eq:pic}. Since $y'$ is chosen arbitrarily from $W$, we know that \eqref{eq:semismoothphip} holds. Then, \eqref{eq:semismoothphip}, together with  the definition of the B-subdifferential of $\phi'$ at $y$, further implies that  $\partial_B\phi'(y) \subseteq \cM(y)$.
\end{proof}
Next, given $y\in\Re$, we show how to construct an element in $\cM(y)$. Let the index set ${\Gamma} = \cI(\Pi_C(y\lambda + b))$, i.e., the set of active indices at $\Pi_{C}(y\lambda + b)$. Then,  
$
{H} = I_n - B_{ \Gamma}^T(B_{ \Gamma} B_{ \Gamma}^T)^{-1}B_{ \Gamma} \in \cH(y\lambda + b)
$ and it holds from the definition of $\cM$ in \eqref{eq:cM} that
\begin{equation}
\label{eq:elementM}
 M = \lambda^T  H \lambda \in \cM( y).
\end{equation}
In the following proposition, we show that all the elements in $\cM(y)$ are positive for all $y$ such that $\Pi_{C}(y\lambda + b) \neq 0$. Proposition \ref{prop:positiveM} will be critical for the algorithmic design and for establishing the convergence properties of the algorithm.
\begin{proposition}
	\label{prop:positiveM}
Given a nonzero vector $\lambda \in C$ and $b\in\Re^n$, let $S = \{y\in\Re\mid \Pi_C(y\lambda + b) \neq 0 \}$. Then, $S$ is nonempty and for any $y\in S$, it holds that
\[
M >0, \quad \forall\, M\in \cM(y),
\]
where the set-valued mapping $\cM$ is defined in \eqref{eq:cM}.
\end{proposition}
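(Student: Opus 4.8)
The plan is to establish the two assertions separately: nonemptiness of $S$, and strict positivity of every $M \in \cM(y)$ for $y \in S$.

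First I would prove that $S$ is nonempty via a polar-cone argument. For a closed convex cone, $\Pi_C(d) = 0$ if and only if $d$ lies in the polar cone $C^{\circ} := \{v \in \Re^n \mid \inprod{v}{x} \le 0, \ \forall\, x \in C\}$, a direct consequence of Moreau's decomposition. Since $\lambda \in C$, every $d \in C^{\circ}$ satisfies $\inprod{d}{\lambda} \le 0$. Taking $d = y\lambda + b$ yields $\inprod{y\lambda + b}{\lambda} = y\norm{\lambda}^2 + \inprod{b}{\lambda}$, which is strictly positive once $y > -\inprod{b}{\lambda}/\norm{\lambda}^2$ (here $\lambda \neq 0$). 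For such $y$ the point $y\lambda + b$ cannot lie in $C^{\circ}$, so $\Pi_C(y\lambda + b) \neq 0$ and $y \in S$; in particular $S$ contains a halfline and is nonempty.

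Next I would fix $y \in S$ and set $d := y\lambda + b$, so $\Pi_C(d) \neq 0$. Any $M \in \cM(y)$ has the form $M = \lambda^T H \lambda$ with $H = I_n - B_\Gamma^T(B_\Gamma B_\Gamma^T)^{-1} B_\Gamma \in \cH(d)$ for some $\Gamma \in \cK(d)$, and by definition $\Gamma \subseteq \cI(\Pi_C(d))$. Since $H$ is the orthogonal projector onto ${\rm Null}(B_\Gamma)$, it is symmetric and idempotent, so $M = \lambda^T H^T H \lambda = \norm{H\lambda}^2 \ge 0$, with equality exactly when $\lambda \in {\rm Null}(B_\Gamma)^{\perp} = \Range(B_\Gamma^T)$. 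Hence it suffices to show $\lambda \notin \Range(B_\Gamma^T)$. When $\Gamma = \emptyset$ this is immediate, as $H = I_n$ gives $M = \norm{\lambda}^2 > 0$.

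For $\Gamma \neq \emptyset$, the key step would be to show that $\begin{bmatrix} \lambda^T \\ B_\Gamma \end{bmatrix}$ has full row rank, which precludes $\lambda^T$ from being a linear combination of the rows of $B_\Gamma$ and hence rules out $\lambda \in \Range(B_\Gamma^T)$. Because $\Pi_C(d) \in C$ is nonzero and $B$ is invertible, $\cI(\Pi_C(d)) \neq \{1,\dots,n\}$, and the very contradiction argument used in the proof of Proposition \ref{prop:nondegen} --- testing $\lambda = \sum_i B_{i,:}^T \mu_i$ against the indicator $\tilde e$ of the first $i_0$ coordinates, where $i_0$ is the smallest index outside $\cI(\Pi_C(d))$, and invoking $\lambda_1 > 0$ --- shows that $\begin{bmatrix} \lambda^T \\ B_{\cI(\Pi_C(d))} \end{bmatrix}$ has linearly independent rows. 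Since $\Gamma \subseteq \cI(\Pi_C(d))$, the rows of $\begin{bmatrix} \lambda^T \\ B_\Gamma \end{bmatrix}$ are a subset of these and remain independent, giving $M = \norm{H\lambda}^2 > 0$. The main obstacle I anticipate is precisely this reduction of the full-rank property from the exact active set (handled by Proposition \ref{prop:nondegen}) to an arbitrary $\Gamma$ with ${\rm Supp}(z(d)) \subseteq \Gamma \subseteq \cI(\Pi_C(d))$, though it follows at once once independence of the larger row system is established.
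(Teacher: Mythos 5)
Your proof is correct, but it follows a genuinely different route from the paper's in both halves. For the nonemptiness of $S$, the paper constructs an explicit competitor: it picks $\hat y$ large enough that $\hat y\lambda_1+b_1>0$, forms the vector $\alpha\in C$ with that single nonzero first entry, and checks directly that $\norm{\alpha-(\hat y\lambda+b)}<\norm{\hat y\lambda+b}$, so the projection cannot be $0$; your Moreau-decomposition argument ($\Pi_C(d)=0$ iff $d\in C^{\circ}$, and $\inprod{y\lambda+b}{\lambda}>0$ for large $y$ forces $d\notin C^{\circ}$) is cleaner and works verbatim for any closed convex cone containing $\lambda$. For the positivity of $M$, the paper does \emph{not} pass through a rank argument: it uses the explicit block formula for $H$ derived in Subsection \ref{subsec:hspic} to conclude that $H$ is entrywise nonnegative with $H_{11}>0$ whenever $\Pi_C(d)\neq 0$ (Lemma \ref{lem:h11p}), and then bounds $\lambda^T H\lambda\ge H_{11}\lambda_1^2>0$ using $\lambda\ge 0$ and $\lambda_1>0$. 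You instead write $M=\norm{H\lambda}^2$ and rule out $H\lambda=0$ by showing $\lambda\notin\Range(B_\Gamma^T)$, recycling the full-row-rank contradiction from Proposition \ref{prop:nondegen} applied at $\cI(\Pi_C(d))$ and restricting to $\Gamma\subseteq\cI(\Pi_C(d))$. Your version is more structural --- it does not depend on the nonnegativity or the block-diagonal form of $H$, only on $H$ being the projector onto ${\rm Null}(B_\Gamma)$ and on the constraint-nondegeneracy mechanism --- whereas the paper's version buys a sharper quantitative bound ($M\ge H_{11}\lambda_1^2$) essentially for free from machinery it needs anyway. All the steps you flag as potential obstacles (in particular the descent from the full active set to an arbitrary $\Gamma\in\cK(d)$ via inclusion of ranges) do go through as you describe.
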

\begin{proof}
We first show that $S\neq \emptyset$. Since $\lambda \in C$ is nonzero, it holds that $\lambda_{11} > 0$. Hence, there exists $\hat y >0$ large enough such that $\hat y \lambda_1 + b_1 >0$. 
Consider a vector $\alpha \in \Re^n$ with $\alpha_1 = \hat y \lambda_1 + b_1$ and $\alpha_i =0 $ for all $i=2,\ldots,n$. Clearly, $\alpha \in C$. Observe that
\[
\norm{\alpha - (\hat y\lambda+b)}^2 = \sum_{i=2}^{n} (\hat y\lambda_i + b_i)^2
< \sum_{i=1}^{n} (\hat y\lambda_i + b_i)^2 = \norm{\hat y\lambda + b}^2.
\]
Hence, $\hat y\in S$, i.e., $S$ is nonempty.

For any $y\in S$, since $\Pi_C(y\lambda + b) \neq 0$, we know that 
 $\cI(\Pi_C(y\lambda + b)) \neq \left\{ 1,\ldots, n\right\}$. By Lemma \ref{lem:h11p}, we have that $H_{11} >0$ for all $H\in \cH(y\lambda+b)$. Therefore, simple calculations show that
\[
\lambda^T H \lambda \ge H_{11} \lambda_1^2 > 0, \quad \forall\, H\in \cH(y\lambda+b). 
\]
The desired conclusion then follows from the definition of $\cM$ in \eqref{eq:cM}.
\end{proof}

\begin{corollary}
	\label{coro:Mystar}
For any given nonzero vector $\lambda\in C$, $b\in\Re^n$ and $\tau >0$, problem \eqref{eq:d} has a unique optimal solution, denoted $y^*$, i.e., the optimal solution set $\Omega_D$ is a singleton. Moreover, $M >0$ for all $M\in \cM(y^*)$.
\end{corollary}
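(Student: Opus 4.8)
The plan is to exploit the machinery already assembled, so that the only genuine content is uniqueness together with the positivity of $\cM$ at the optimum; existence of a minimizer is already available from the coercivity of $\phi$ noted right after \eqref{eq:d}. I would organize everything around the monotonicity of the Lipschitz continuous piecewise affine function $\phi'$.

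First I would record that $\phi'$ is nondecreasing on $\Re$. This follows from the fact that $\phi$ is the (convex) dual objective of the convex projection problem \eqref{prob:newp}; equivalently, it is a one-line consequence of the monotonicity of the projector $\Pi_C$, since for $y' > y$ the inequality $\inprod{\Pi_C(y'\lambda+b) - \Pi_C(y\lambda+b)}{(y'-y)\lambda} \ge 0$ yields $\phi'(y') \ge \phi'(y)$. Consequently the solution set $\Omega_D = \{y\in\Re \mid \phi'(y) = 0\}$ is a nonempty closed interval. Next I would locate every minimizer inside the set $S$ of Proposition \ref{prop:positiveM}: if $y^*\in\Omega_D$, then $\phi'(y^*)=0$ forces $\inprod{\Pi_C(y^*\lambda+b)}{\lambda} = \tau > 0$, which is impossible when $\Pi_C(y^*\lambda+b)=0$, so $y^*\in S$. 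Proposition \ref{prop:positiveM} then immediately delivers $M>0$ for every $M\in\cM(y^*)$, which is exactly the ``Moreover'' assertion.

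For uniqueness I would feed this positivity into the local affine identity of Proposition \ref{prop:cM}. There is a neighborhood $W$ of $y^*$ on which $\phi'(y') - \phi'(y^*) = M(y'-y^*)$ for every $M\in\cM(y')\subseteq\cM(y^*)$; since each such $M$ is strictly positive and $\phi'(y^*)=0$, we obtain $\phi'(y')>0$ for $y'\in W$ with $y'>y^*$ and $\phi'(y')<0$ for $y'\in W$ with $y'<y^*$. Combining this strict sign change with the global monotonicity of $\phi'$ propagates the strict inequalities to the whole line: $\phi'(y)>0$ for all $y>y^*$ and $\phi'(y)<0$ for all $y<y^*$. Hence $y^*$ is the unique zero of $\phi'$, and therefore $\Omega_D=\{y^*\}$.

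The main obstacle I anticipate is not any single computation but the logical bridge from the local to the global statement. Proposition \ref{prop:positiveM} controls $\cM$ only pointwise, so I must be sure that the strict-slope information harvested near $y^*$ is genuinely transferred to all of $\Re$; the monotonicity of $\phi'$ is precisely what licenses this transfer. Accordingly, the one step worth spelling out carefully is why $\phi'$ is nondecreasing, after which the uniqueness reduces to the short sign-propagation argument above.
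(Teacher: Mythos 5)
Your proposal is correct, and the ``Moreover'' part coincides with the paper's argument: both reduce to observing that $\phi'(y^*)=0$ and $\tau>0$ force $\Pi_C(y^*\lambda+b)\neq 0$, so Proposition \ref{prop:positiveM} applies. Where you diverge is the uniqueness step. The paper notes that $\partial\phi'(y)\subseteq \mathrm{conv}(\cM(y))$ consists of positive numbers at every $y\in\Omega_D$ and then invokes the Clarke inverse function theorem, which yields local invertibility of $\phi'$ at each solution and hence that the zeros of $\phi'$ are isolated; since $\Omega_D$ is a nonempty closed interval, it must be a singleton (the paper leaves this last local-to-global step implicit). You instead prove that $\phi'$ is nondecreasing directly from the monotonicity of $\Pi_C$, use the local affine identity of Proposition \ref{prop:cM} together with the positivity of $\cM(y^*)$ to get a strict sign change of $\phi'$ across $y^*$, and then let monotonicity propagate the strict inequalities to all of $\Re$. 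This is more elementary --- it avoids the Clarke inverse function theorem entirely --- and it makes explicit precisely the bridge from local to global that the paper glosses over; the price is the extra (but one-line) verification that $\phi'$ is nondecreasing, which the paper never needs to state. Both arguments ultimately rest on the same positivity result, so the difference is one of mechanism rather than of substance.
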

\begin{proof}
We note that $\Omega_D$ is exactly the solution set of the univariate piecewise affine equation \eqref{prob:first}. Since $\tau >0$, 
$\Pi_{C}(y\lambda + b) \neq 0$ for any $y\in \Omega_D$. Meanwhile, from Proposition \ref{prop:cM}, we have that $\partial \phi'(y) = {\rm conv}\partial_B \phi'(y) \subseteq {\rm conv}(\cM(y))$. By Proposition \ref{prop:positiveM}, we know that for any $y\in \Omega_D$, all the elements in $\partial \phi'(y)$ are positive.
Thus, the Clarke inverse function theorem \cite[Proposition 7.1.19]{facchinei2007finite} implies that $\Omega_D $ is a singleton.
\end{proof}

With all the above preparations, we are ready to present a semismooth Newton method for solving \eqref{prob:first}.

\medskip

\centerline{\fbox{\parbox{\textwidth}{
			{\bf Algorithm {\sc Ssn}}: {\bf A semismooth Newton algorithm for solving \eqref{prob:first}}
			\\[5pt]
			Set $\mu \in (0,1/2)$ and $\delta\in(0,1)$ and choose $y^0 \in \Re$. Iterate the following steps for $j=0,1,\ldots.$
			\begin{description}
				\item[Step 1.]  Compute $\Pi_{C}(y^j\lambda+b)$ and set \begin{equation*}
				d^j = \left\{
				\begin{aligned}
				& -\phi'(y^j), \quad \mbox{if } \Pi_{C}(y^j\lambda+b) = 0,\\
				& -\phi'(y^j)/M_j, \quad \mbox{otherwise},
				\end{aligned}
				\right.
				\end{equation*} 
				where $M_j \in \cM(y^j\lambda + b)$ is constructed as in \eqref{eq:elementM}. 
				\item[Step 2.] (Line search)  Set $\alpha_j = \delta^{m_j}$, where $m_j$ is the first nonnegative integer $m$ for which
				\begin{equation*}
				\phi(y^j + \delta^{m}d^j) \leq \phi(y^j) + \mu \delta^{m} 
				\phi'(y^j) d^j.
				\end{equation*}
				\item[Step 3.] Set
				$y^{j+1} = y^j + \alpha_j \, d^j.$
			\end{description}
}}}

\medskip

\begin{theorem}
	\label{them:convergenceSSN}
Let  $\{y^j\}$ be the infinite sequence generated by Algorithm {\sc Ssn}.
Then $\{y^j\}$ converges to the unique optimal solution $y^*$ of  problem \eqref{eq:d}. Moreover, it holds that for all $j$ sufficiently large, $y^{j+1} = y^j + d^j$ with $d^j = -\phi'(y^j)/M_j$ and $
|y^{j+1} -  y^*| = O(| y^{j}- y^*| ^2).$
\end{theorem}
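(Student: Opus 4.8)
The plan is to establish global convergence first and then the local unit-step (quadratic) rate. Since $M \ge 0$ for every $M \in \cM(y)$ (Proposition \ref{prop:cM}), the relation \eqref{eq:semismoothphip} shows that $\phi'$ is nondecreasing, so $\phi$ is a convex, coercive and continuously differentiable function whose unique minimizer is the $y^*$ of Corollary \ref{coro:Mystar}. I would start by checking that $d^j$ is always a genuine descent direction: when $\Pi_C(y^j\lambda+b)=0$ one has $\phi'(y^j)d^j=-(\phi'(y^j))^2$, and otherwise $\phi'(y^j)d^j=-(\phi'(y^j))^2/M_j<0$ because $M_j>0$ by Proposition \ref{prop:positiveM}. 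Hence the Armijo rule in Step 2 is well defined and $\{\phi(y^j)\}$ is nonincreasing, so by coercivity the whole sequence stays in the compact level set $\{y\mid \phi(y)\le\phi(y^0)\}$.

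To pass from boundedness to convergence I would show the search directions are gradient related. Using the explicit block form \eqref{eq:Hi} together with Lemma \ref{lem:h11p}, the $(1,1)$ entry of every $H\in\cH(d)$ with $\Pi_C(d)\neq 0$ equals either $1$ or $1/(n_1+1)$, whence $M_j=\lambda^T H\lambda\ge \lambda_1^2/(n+1)>0$; since each $H$ is an orthogonal projector we also have $M_j\le\|\lambda\|^2$. Thus $d^j$ is a uniformly scaled steepest-descent direction, and the standard convergence theory for Armijo descent methods yields that every accumulation point is a stationary point of $\phi$; uniqueness of $y^*$ then forces $y^j\to y^*$.

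For the local analysis, note first that $\phi'(y^*)=0$ gives $\inprod{\Pi_C(y^*\lambda+b)}{\lambda}=\tau>0$, so $\Pi_C(y^*\lambda+b)\neq 0$ and, by continuity of $\Pi_C$, the iterates eventually fall in the ``otherwise'' branch with $d^j=-\phi'(y^j)/M_j$. Applying \eqref{eq:semismoothphip} with base point $y^*$, there is a neighborhood $W$ of $y^*$ such that for all $y^j\in W$ and the chosen $M_j\in\cM(y^j)$ one has $\phi'(y^j)=\phi'(y^j)-\phi'(y^*)=M_j(y^j-y^*)$. Consequently the full Newton step already lands on the solution, $y^j+d^j=y^j-(y^j-y^*)=y^*$.

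The key remaining step — and the one I expect to be the main obstacle — is to prove that the line search accepts the unit step $\alpha_j=1$ for all large $j$, i.e. that $\phi(y^j+d^j)\le\phi(y^j)+\mu\,\phi'(y^j)d^j$. Since $y^j+d^j=y^*$, this is equivalent to $\phi(y^j)-\phi(y^*)\ge \mu M_j(y^j-y^*)^2$ after using $\phi'(y^j)d^j=-M_j(y^j-y^*)^2$. Invoking the second-order expansion of Proposition \ref{prop:taylor2} at $y^*$ gives $\phi(y^j)-\phi(y^*)=\tfrac12 M_j(y^j-y^*)^2+o(|y^j-y^*|^2)$, so the required inequality reduces to $(\tfrac12-\mu)M_j(y^j-y^*)^2+o(|y^j-y^*|^2)\ge 0$, which holds for $y^j$ close enough to $y^*$ because $\mu<1/2$ and $M_j$ is bounded below by $\lambda_1^2/(n+1)$. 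Once unit steps are taken, the identity $|y^{j+1}-y^*|=|\phi'(y^j)-\phi'(y^*)-M_j(y^j-y^*)|/M_j$ combined with the $\gamma$-order semismoothness of $\phi'$ with $\gamma=1$ yields $|y^{j+1}-y^*|=O(|y^j-y^*|^2)$ (indeed this quantity is exactly zero by \eqref{eq:semismoothphip}, giving finite termination). The delicate point throughout is reconciling the globalized line search with the locally exact Newton behaviour, which is precisely why the second-order estimate and the condition $\mu<1/2$ are needed.
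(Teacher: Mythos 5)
Your proof is correct and follows the same overall architecture as the paper's: Armijo descent plus coercivity and the uniqueness from Corollary \ref{coro:Mystar} for global convergence, then Proposition \ref{prop:taylor2} together with $\mu<1/2$ to show the unit step is eventually accepted. Two of your steps, however, are genuinely different and worth comparing. First, for the local rate the paper invokes the strong semismoothness of $\phi'$ and an argument "similar to \cite[Theorem 3.5]{zhao2010newton}" to get $|y^j+d^j-y^*|=O(|y^j-y^*|^2)$, whereas you apply the locally exact piecewise-affine identity \eqref{eq:semismoothphip} at the base point $y^*$ to conclude $\phi'(y^j)=M_j(y^j-y^*)$ and hence $y^j+d^j=y^*$ exactly; this is precisely the argument the paper defers to Theorem \ref{thm:finiteSSN}, so you have effectively folded the finite-termination proof into the convergence proof and obtained a statement stronger than the claimed $O(|y^j-y^*|^2)$ bound. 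Second, where the paper establishes uniform boundedness of $\{M_j^{-1}\}$ only locally, via upper semicontinuity of $\cM$ and \cite[Lemma 7.5.2]{facchinei2007finite}, you extract the explicit global bounds $\lambda_1^2/(n+1)\le M_j\le \norm{\lambda}^2$ from the block formula \eqref{eq:Hi} and the fact that $H$ is an orthogonal projector; this makes the directions gradient related everywhere and renders the appeal to \cite[Proposition 1.2.1]{Bertsekas1991nonlinear} more self-contained, at the mild cost of relying on the specific structure of $\cH$ rather than on the general semismooth Newton machinery. Both routes are sound; yours trades citations for explicit computations and delivers finite termination as a byproduct, while the paper's keeps the convergence theorem within the standard semismooth Newton template and isolates finite termination as a separate result.
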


\begin{proof} 
By Proposition \ref{prop:positiveM}, we know that $d^j$ is always well-defined and a descent direction. Thus, Algorithm {\sc Ssn} is well-defined. Since $\phi$ is a coercive function, the sequence $\{y^j\}$ is bounded. Then, the standard convergence analysis \cite[Proposition 1.2.1]{Bertsekas1991nonlinear}
implies that any cluster point of $\{y^j\}$ is  a stationary point of problem \eqref{eq:d}, and hence an optimal solution since $\phi$ is convex. Corollary \ref{coro:Mystar} further implies that $y^j$ converges to the unique optimal solution $y^*$ to \eqref{eq:d}.

Since every element in $\cM(y^*)$ is positive and $\cM$ is upper-semicontinuous, from \cite[Lemma 7.5.2]{facchinei2007finite}, Proposition \ref{prop:positiveM} and Corollary \ref{coro:Mystar}, we have that for all $j$ sufficiently large, $\{M_j^{-1}\}$ is uniformly bounded and $d^j = -\phi'(y^j)/M_j$.
{Since $\phi'$ is strongly semismooth with respect to $\cM$,} similar to the proof for \cite[Theorem 3.5]{zhao2010newton}, it can be shown that for all $j$ sufficiently large,
\begin{equation}\label{eq:yorder2}
|y^j + d^j - y^*|
= \cO(|y^j - y^*|^2),
\end{equation}
and for some constant $\hat \delta >0$,
$ - \phi'(y^j)d^j \ge \hat \delta |d^j|^2.$
Based on \eqref{eq:yorder2}, Proposition \ref{prop:taylor2} and \cite[Proposition 8.3.18]{facchinei2007finite},  we know that for $\mu \in (0,1/2)$, there exists an integer $j_0$ such that for all $j \ge j_0$,
\[\phi(y^j + d^j) \le \phi(y^j) + \mu \phi'(y^j){d^j},\]
i.e.,
$y^{j+1} = y^j + d^j$ for all $j\ge j_0$.
This, together with \eqref{eq:yorder2}, completes the proof.	
\end{proof}

The above theorem asserts the local fast convergence of Algorithm {\sc Ssn}. A careful analysis further shows that Algorithm {\sc Ssn} proposes a finite termination property, i.e., if an iterate $y^j$ is sufficiently close to the optimal solution $y^*$, Algorithm  {\sc Ssn} finds $y^*$ in one step.  
\begin{theorem}
	\label{thm:finiteSSN}
Let  $\{y^j\}$ be the infinite sequence generated by Algorithm {\sc Ssn}. Then, there exists a positive integer $j_0$ such that 
\[ y^j = y^*, \quad \forall\, j>j_0.\]
\end{theorem}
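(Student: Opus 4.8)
The plan is to exploit the piecewise affine structure of $\phi'$, which through Proposition \ref{prop:cM} yields an \emph{exact} local affine identity: in a whole neighborhood of any point the semismoothness error vanishes identically rather than being merely $o(\cdot)$. This is exactly what upgrades the quadratic rate of Theorem \ref{them:convergenceSSN} to one-step exactness once the iterates are close enough. First I would record from Theorem \ref{them:convergenceSSN} that $\{y^j\}$ converges to $y^*$ and that, for all $j$ sufficiently large, the unit step is accepted, i.e. $y^{j+1} = y^j + d^j$ with $d^j = -\phi'(y^j)/M_j$ and $M_j \in \cM(y^j)$ constructed as in \eqref{eq:elementM}.

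Next I would apply Proposition \ref{prop:cM} at the point $y^*$ to obtain a neighborhood $W$ of $y^*$ such that $\phi'(y') - \phi'(y^*) - M(y'-y^*) = 0$ for every $y' \in W$ and every $M \in \cM(y')$. Since $y^*$ solves \eqref{prob:first}, we have $\phi'(y^*) = 0$, so this reduces to
\[
\phi'(y') = M(y' - y^*), \quad \forall\, M \in \cM(y'), \ y' \in W.
\]
I would then shrink $W$ so that in addition $W \subseteq S$, where $S = \{y \mid \Pi_C(y\lambda+b)\neq 0\}$; this is possible because $\Pi_C$ is continuous (so $S$ is open) and $y^* \in S$ (as $\tau > 0$ forces $\Pi_C(y^*\lambda+b)\neq 0$, cf. the proof of Corollary \ref{coro:Mystar}). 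On such a $W$, Proposition \ref{prop:positiveM} guarantees $M_j > 0$ whenever $y^j \in W$.

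With this in place, finite termination follows by a direct computation. Choose $j_0$ so large that for all $j \ge j_0$ both the unit step is accepted and $y^j \in W$. For such $j$, setting $y' = y^j$ and $M = M_j$ in the displayed identity gives $\phi'(y^j) = M_j(y^j - y^*)$, whence
\[
y^{j+1} = y^j - \frac{\phi'(y^j)}{M_j} = y^j - (y^j - y^*) = y^*.
\]
It remains to check that the iterate stays put: once $y^{j+1} = y^*$ we have $\Pi_C(y^{j+1}\lambda+b)\neq 0$ and $\phi'(y^{j+1}) = 0$, so the algorithm computes $d^{j+1} = -\phi'(y^{j+1})/M_{j+1} = 0$ and $y^{j+2} = y^{j+1} = y^*$; an induction then gives $y^j = y^*$ for all $j > j_0$.

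I do not expect a genuine obstacle, since the Newton update is algebraically forced to equal $y^*$ the moment the exact affine identity applies. The only real care needed is the bookkeeping that secures, for a common range of indices, the three simultaneous conditions: acceptance of the unit step (from Theorem \ref{them:convergenceSSN}), membership $y^j \in W$ where the exact identity of Proposition \ref{prop:cM} holds, and positivity of $M_j$ (from Proposition \ref{prop:positiveM}). After that the conclusion is immediate.
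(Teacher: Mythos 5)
Your proposal is correct and follows essentially the same route as the paper: both proofs combine the eventual acceptance of the unit Newton step from Theorem \ref{them:convergenceSSN} with the exact local identity $\phi'(y^j)-\phi'(y^*)-M_j(y^j-y^*)=0$ of Proposition \ref{prop:cM} (and $\phi'(y^*)=0$) to force $y^{j+1}=y^*$ in a single step. Your additional bookkeeping (shrinking $W$ into $S$ for positivity of $M_j$, and verifying the iterate stays at $y^*$) is sound and merely makes explicit what the paper leaves implicit.
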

\begin{proof}
	By Theorem \ref{them:convergenceSSN} and Proposition \ref{prop:cM}, we know that there exists a positive integer $ j_0$ such that for all $j\ge j_0$, all elements in $\cM(y^j)$ are positive, $d_j = -\phi'(y^j)/M_j$ and
	\[y^{j+1} = y^j + d^j \quad {\rm and } \quad 
	\phi'(y^j)- \phi'(y^*) - M_j (y^j - y^*) = 0, \quad \forall\, M_j \in \cM(y^j).
	\]
	Then, it holds that for all $j\ge j_0$
	\begin{align*}
		y^{j + 1} - y^* ={}& y^{j} + d^{j} - y^*\\
		 ={}&-(\phi'(y^j)- M_j (y^j - y^*))/M_{j} \\
		 = {}&-(\phi'(y^j)- \phi'(y^*) - M_j (y^j - y^*))/M_{j} = 0.
	\end{align*}
	We thus complete the proof.
\end{proof}

\section{Generalized Jacobian of $\Pi_{\Delta_\tau}$}
\label{sec:GJacobianPiDeltatau}
Besides the fast computation of $\Pi_{\Delta_{\tau}}$, another important task is to obtain certain generalized Jacobian associated with this projector. As is demonstrated in \cite{Li2018efficiently,Li2020efficient,Luo2019Solving,Lin2019efficient,Zhang2020efficient}, this concept will be critical to the design of efficient nonsmooth second order algorithms for solving general OWL1 constrained optimization problems. Similar to our analysis of the projector $\Pi_{C}$, a first thought would be trying to derive the HS-Jacobian of $\Pi_{\Delta_\tau}$. However, as far as we know, there is no 
explicit polyhedral representation for the OWL1 norm constraint
$\{x\in \Re^n \mid \kappa_{\lambda}(x)\le \tau\}$ in the literature, which we believe is a nontrivial task. In this section, we propose to overcome this difficulty by exploiting the composition structure of $\Pi_{\Delta_\tau}$ in Proposition \ref{prop:reform} and investigating the HS-Jacobian of the solution mapping $\Pi_{C_{\lambda}^\tau}$ associated with the reformulated problem \eqref{prob:newp} in which the polyhedral constraint enjoys a simple representation.

We first study the HS-Jacobian of $\Pi_{C_{\lambda}^\tau}$. Similar to Subsection \ref{subsec:hspic}, the HS-Jacobian of $\Pi_{C_{\lambda}^\tau}$ defined in \eqref{prob:newp} at any given point $w\in\Re^n$  can be written as follows:
\[
\cV(w):= \left\{
V\in \Re^{n\times n} \mid V = I_n - [\lambda \ B_\Gamma^T] \left( \begin{bmatrix}
\lambda^T \\
B_{\Gamma}
\end{bmatrix} 
[\lambda^T \, B_\Gamma^T] \right)^{-1} \begin{bmatrix}
\lambda^T \\
B_{\Gamma}
\end{bmatrix}, \, \Gamma\in \cK(w)
\right\},
\]
where $\cK(w):= \{ \Gamma \subseteq \{1,\ldots,n\} \mid {\rm Supp}(y(w)) \subseteq \Gamma \subseteq \cI\big(\Pi_{C_{\lambda}^\tau}(w)\big) \}$ and 
$y(w)$ is the unique solution to the dual of \eqref{prob:newp}.
We remark here that the nonsingularity of the matrix 
$\begin{bmatrix}
	\lambda^T \\
	B_{\Gamma}
\end{bmatrix} 
[\lambda^T \, B_\Gamma^T]$ follows from Proposition \ref{prop:nondegen} and Remark \ref{remk:licq}; given $w\in\Re^n$, the uniqueness of the dual optimal solution $y(w)$ stems from Corollary \ref{coro:Mystar}. Hence, the set-valued mapping $\cV$ is well-defined. Similarly, the set-valued mapping $\cV$ enjoys similar properties as $\cK$ in Lemma \ref{lem:partialproj}. In particular, it is not difficult to show that $\cV(w) \equiv \partial_B \Pi_{C_{\lambda}^\tau}(w)$ for all $w\in\Re^n$.

Now, we are ready to define the following set-valued mapping associated with $\Pi_{\Delta_{\tau}}$:
\begin{equation}
\label{eq:cS}
\cS(b): = \left\{
S\in\Re^{n\times n} \mid S = P^T V P, \, P\in \Pi^s(b), \, V\in \cV(Pb)
\right\}.
\end{equation}
We show in the next proposition that $\cS$ can be regarded as a computation tractable alternative of the Clarke generalized Jacobian of $\Pi_{\Delta_{\tau}}$ and $\Pi_{\Delta_{\tau}}$ is $\gamma$-order semismooth with respect to $\cS$ for any given $\gamma >0$. 
\begin{proposition}
		\label{prop:cS}
		Let $\lambda\neq 0 \in C$. Then, $\cS$, defined in \eqref{eq:cS}, is a nonempty and  compact valued and upper-semicontinuous set-valued mapping, and for any given vector $b\in\Re^n$, every $S\in \cS(b)$ is symmetric positive semidefinite. Moreover, there exists a neighborhood $W$ of $b$ such that for all $b'\in W$
		\begin{equation*} \label{eq:semismoothPidelta}
		{	\cS(b')\subseteq \cS(b)\quad {\rm and} \quad \Pi_{\Delta_\tau}(b')- \Pi_{\Delta_\tau}(b) - S (b' - b) = 0, \quad \forall\, S \in \cM(b').}
		\end{equation*}
		Therefore, $\partial_B\Pi_{\Delta_{\tau}}(b) \subseteq \cS(b)$.
\end{proposition}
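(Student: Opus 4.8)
The plan is to exploit the composition formula $\Pi_{\Delta_\tau}(b) = P^T x_\lambda(Pb)$ from Proposition \ref{prop:reform} and to transport the first order properties of $x_\lambda$, encoded by $\cV$, through the orthogonal change of variables $P$ by a chain-rule argument. The backbone is that $\cV$ inherits, \emph{mutatis mutandis}, all the conclusions of Lemma \ref{lem:partialproj} (indeed $\cV(w)\equiv\partial_B x_\lambda(w)$, and in particular the exact relation analogous to \eqref{eq:le_cH2} holds), combined with a new local stability property of the set-valued map $\Pi^s$. I would organize the proof around these two ingredients and then simply ``conjugate'' the corresponding statements for $x_\lambda$ by $P$.

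First I would record the routine structural facts. For each fixed $w$, $\cK(w)$ is a finite family of index sets, so $\cV(w)$ is a finite set; likewise $\Pi^s(b)$, being a subset of the finite group ${\bf \Pi}_{\bf n}^{\bf s}$, is finite. Hence $\cS(b)$ is a finite union of finite sets, so it is nonempty (both $\Pi^s(b)$ and $\cV(Pb)$ are nonempty) and compact. For the semidefiniteness claim, fix $P\in\Pi^s(b)$ and $V\in\cV(Pb)$ and set $A:=\begin{bmatrix}\lambda^T \\ B_\Gamma\end{bmatrix}$, whose rows are linearly independent by Proposition \ref{prop:nondegen} and Remark \ref{remk:licq}; then $V=I_n-A^T(AA^T)^{-1}A$ is the orthogonal projector onto ${\rm Null}(A)$, hence symmetric, idempotent and positive semidefinite. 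Since $P$ is a signed permutation matrix it is orthogonal ($P^TP=I_n$), so $S=P^TVP=P^{-1}VP$ is again a symmetric orthogonal projector; in particular $S^T=S$ and $x^TSx=(Px)^TV(Px)\ge 0$, settling the second claim.

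The crux of the argument, and the step I expect to be the main obstacle, is a local shrinking property of $\Pi^s$: there is a neighborhood $U$ of $b$ with $\Pi^s(b')\subseteq\Pi^s(b)$ for all $b'\in U$. I would prove this by analyzing how a signed permutation acts: writing $(Pu)_i=\epsilon_i u_{\pi(i)}$, membership $P\in\Pi^s(u)$ is equivalent to $\pi$ sorting $|u|$ into nonincreasing order together with $\epsilon_i={\rm sign}(u_{\pi(i)})$ whenever $u_{\pi(i)}\neq 0$ (and $\epsilon_i$ free otherwise). For $b'$ near $b$, every strict inequality $|b_k|>|b_l|$ and the sign of every nonzero entry $b_k$ persist, so any $\pi$ sorting $|b'|$ also sorts $|b|$ (it merely fixes a tie-break inside a tie-block of $|b|$), and any sign forced by a nonzero $b'_{\pi(i)}$ coincides with the one required by $b$, while entries vanishing at $b$ impose no sign constraint at all; hence $P\in\Pi^s(b)$. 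Intuitively, small perturbations can only break ties, never create them, so $\Pi^s$ can only shrink. This inclusion also yields upper semicontinuity of $\cS$ at once, since any open set $\cO\supseteq\cS(b)$ then contains $\cS(b')\subseteq\cS(b)$ for all nearby $b'$.

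Finally I would glue the pieces on a common neighborhood $W\subseteq U$ chosen small enough that, for each of the finitely many $P\in\Pi^s(b)$, the point $Pb'$ lies in the neighborhood of $Pb$ furnished by the $\cV$-analog of Lemma \ref{lem:partialproj}; this is possible because each $P$ is a fixed orthogonal map and $\Pi^s(b)$ is finite, so one intersects finitely many constraints. For $b'\in W$ and $S=P^TVP\in\cS(b')$ one then has $P\in\Pi^s(b')\subseteq\Pi^s(b)$ and $V\in\cV(Pb')\subseteq\cV(Pb)$, whence $S\in\cS(b)$, i.e. $\cS(b')\subseteq\cS(b)$. Moreover, using $P\in\Pi^s(b')\cap\Pi^s(b)$ in Proposition \ref{prop:reform} and then the exactness relation $x_\lambda(Pb')-x_\lambda(Pb)=V(Pb'-Pb)$ for $V\in\cV(Pb')$, I obtain
\[
\Pi_{\Delta_\tau}(b')-\Pi_{\Delta_\tau}(b)=P^T\bigl(x_\lambda(Pb')-x_\lambda(Pb)\bigr)=P^TVP(b'-b)=S(b'-b),
\]
which is the asserted identity (with $\cS(b')$ in place of the misprinted $\cM(b')$). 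The inclusion $\partial_B\Pi_{\Delta_\tau}(b)\subseteq\cS(b)$ then follows exactly as in the concluding step of Proposition \ref{prop:cM}: at any differentiability point $b^k\to b$ with $b^k\in W$, the finiteness of $\cS(b^k)$ forces the exact local identity to be realized by a single element, so $J\Pi_{\Delta_\tau}(b^k)\in\cS(b^k)\subseteq\cS(b)$, and closedness of the finite set $\cS(b)$ passes to the limit.
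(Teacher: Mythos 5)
Your proof is correct and follows essentially the same route as the paper, which simply delegates the argument to \cite[Theorem 1]{Luo2019Solving} together with the stated properties of $\cV$: the local shrinking property $\Pi^s(b')\subseteq \Pi^s(b)$ for $b'$ near $b$ and the conjugation of the exactness relation for $x_\lambda$ by the orthogonal signed permutation $P$ are precisely the ingredients of that cited argument, and you have reconstructed them in full. You also correctly read the $\cM(b')$ in the displayed identity as a misprint for $\cS(b')$.
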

\begin{proof}
	By using the same arguments in \cite[Theorem 1]{Luo2019Solving} and noting the properties of $\cV$, we can easily obtain the desired results.
\end{proof}

Given any $b\in \Re^n$, one can observe that to obtain an element in $\cS(b)$, an important step is to obtain a matrix $V\in \cV(Pb)$ with some $P\in \Pi^s(b)$. We show in the following lemma how this matrix can be efficiently constructed.
\begin{lemma}
	\label{lem:Vcomputation}
	Let $\Gamma \subseteq \{ 1,\ldots, n\}$ be a given index set and  $\alpha\in\Re^n$ be a given nonzero vector. Suppose that $\alpha \not\in {\rm Range}(B)$. It then holds that
	\begin{equation}\label{eq:Scomputation}
	I_n - [\alpha \, B_\Gamma^T] \left( \begin{bmatrix}
	\alpha^T \\
	B_{\Gamma}
	\end{bmatrix} 
	[\alpha \, B_\Gamma^T] \right)^{-1} \begin{bmatrix}
	\alpha^T \\
	B_{\Gamma}
	\end{bmatrix}
	=  H - \frac{1}{\alpha_1^T \alpha_1}\alpha_1 \alpha_1^T, 
	\end{equation}
	where $H = I_n - B_{\Gamma}^T \big( B_\Gamma B_\Gamma^T\big)^{-1} B_\Gamma$ and $\alpha_1\in \Re^n$ denotes the orthogonal projection of $\alpha$ onto the null space of $B_{\Gamma}$, i.e., $\alpha_1 = H\alpha = \alpha - B_{\Gamma}^T \big( B_\Gamma B_\Gamma^T\big)^{-1} B_\Gamma \alpha$.
\end{lemma}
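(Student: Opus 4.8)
The plan is to read both sides of \eqref{eq:Scomputation} as orthogonal projection matrices and then identify the subspaces onto which they project. Writing $M = \begin{bmatrix} \alpha^T \\ B_\Gamma \end{bmatrix}$, the left-hand side is exactly $I_n - M^T(MM^T)^{-1}M$, the orthogonal projector onto $\mathrm{Null}(M) = \{x \in \Re^n \mid \alpha^T x = 0,\ B_\Gamma x = 0\} = \alpha^\perp \cap L$, where $L := {\rm Null}(B_\Gamma)$. By the same token $H = I_n - B_\Gamma^T(B_\Gamma B_\Gamma^T)^{-1}B_\Gamma$ is the orthogonal projector onto $L$, and $\alpha_1 = H\alpha$ is the orthogonal projection of $\alpha$ onto $L$. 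Thus the claimed identity is precisely the assertion that projecting onto $\alpha^\perp \cap L$ equals projecting onto $L$ and then subtracting the rank-one projection onto $\mathrm{span}(\alpha_1)$.

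First I would dispose of the well-definedness issues, which both reduce to one condition. Since $B_\Gamma$ already has full row rank, $M$ has full row rank if and only if $\alpha^T$ is linearly independent from the rows of $B_\Gamma$, i.e.\ $\alpha \notin \Range(B_\Gamma^T)$; this is exactly what makes $MM^T$ invertible so that the left-hand side is defined. The same condition yields $\alpha_1 \neq 0$: were $\alpha_1 = H\alpha = 0$, then $\alpha \in L^\perp = \Range(B_\Gamma^T)$, which is excluded. This is where the standing hypothesis on $\alpha$ (that it avoids the row space of $B_\Gamma$) is used, and it guarantees that the scalar $\alpha_1^T\alpha_1$ in the denominator on the right is strictly positive.

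The key reduction is that on the subspace $L$ the two linear functionals $x \mapsto \alpha^T x$ and $x \mapsto \alpha_1^T x$ coincide. Indeed, for $x \in L$ we have $(\alpha - \alpha_1)^T x = \big((I_n - H)\alpha\big)^T x = 0$, because $(I_n - H)\alpha \in \Range(B_\Gamma^T) = L^\perp$ while $x \in L$; hence $\alpha^T x = \alpha_1^T x$ and therefore $\alpha^\perp \cap L = \alpha_1^\perp \cap L$. Since $\alpha_1 \in L$ and $\alpha_1 \neq 0$, the space $L$ splits orthogonally as $L = \mathrm{span}(\alpha_1) \oplus (\alpha_1^\perp \cap L)$, so the projector onto $\alpha_1^\perp \cap L$ equals $H$ minus the rank-one projector $\alpha_1\alpha_1^T/(\alpha_1^T\alpha_1)$ onto $\mathrm{span}(\alpha_1)$, which is exactly the right-hand side of \eqref{eq:Scomputation}. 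To make the final step airtight I would verify directly that $H - \alpha_1\alpha_1^T/(\alpha_1^T\alpha_1)$ is symmetric and idempotent and acts as the identity on $\alpha_1^\perp \cap L$ while annihilating both $L^\perp$ and $\mathrm{span}(\alpha_1)$; uniqueness of the orthogonal projector onto a given subspace then forces it to equal the left-hand side.

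I expect the only genuine subtlety to be the reduction $\alpha^\perp \cap L = \alpha_1^\perp \cap L$ together with the nonvanishing of $\alpha_1$; once these are secured, the orthogonal splitting of $L$ makes the formula immediate. An alternative, purely computational route would expand $(MM^T)^{-1}$ via the block/Schur-complement formula applied to the blocks $\alpha^T\alpha$, $\alpha^T B_\Gamma^T$, and $B_\Gamma B_\Gamma^T$, using $H B_\Gamma^T = 0$ to simplify; this works but is considerably more tedious and less transparent, so I would present the projection argument and reserve the algebraic manipulation for a brief confirmatory check.
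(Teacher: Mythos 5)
Your proof is correct and follows essentially the same route as the paper's: both arguments pivot on the observations that $\alpha_1=H\alpha$ is nonzero, lies in ${\rm Null}(B_\Gamma)$, and cuts out the same subspace of ${\rm Null}(B_\Gamma)$ as $\alpha$ does, so that $\alpha$ may be replaced by $\alpha_1$, after which the orthogonality $B_\Gamma\alpha_1=0$ makes the formula split into $H$ minus the rank-one projector. The only difference is presentational: the paper performs the substitution by citing \cite[Lemma 1]{Li2020efficient} and then inverts the resulting block-diagonal Gram matrix, whereas you rederive the same step self-containedly via uniqueness of the orthogonal projector and the orthogonal splitting $L={\rm span}(\alpha_1)\oplus(\alpha_1^{\perp}\cap L)$.
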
 
\begin{proof}
	Since $\alpha \not \in {\rm Rang}(B)$, $\alpha_1$, the orthogonal projection of $\alpha$ onto ${\rm Null}(B_\Gamma)$, is not a zero vector. Moreover, it is not difficult to see that 
	\[ B_\Gamma \alpha_1 = 0 
	\quad {\rm and} \quad
	{\rm Null} \left( 
	\begin{bmatrix}
	\alpha^T \\
	B_{\Gamma}
	\end{bmatrix} 
	\right) = {\rm Null} \left( 
	\begin{bmatrix}
	\alpha_1^T \\
	B_{\Gamma}
	\end{bmatrix} 
	\right) .
	\]
	Hence, by \cite[Lemma 1]{Li2020efficient}, we know that
	\begin{align*}
    [\alpha \, B_\Gamma^T] \left( \begin{bmatrix}
	\alpha^T \\
	B_{\Gamma}
	\end{bmatrix} 
	[\alpha \, B_\Gamma^T] \right)^{-1} \begin{bmatrix}
	\alpha^T \\
	B_{\Gamma}
	\end{bmatrix}
	= {}&   [\alpha_1 \, B_\Gamma^T] \left( \begin{bmatrix}
	\alpha_1^T \\
	B_{\Gamma}
	\end{bmatrix} 
	[\alpha_1 \, B_\Gamma^T] \right)^{-1} \begin{bmatrix}
	\alpha_1^T \\
	B_{\Gamma}
	\end{bmatrix}\\
	={}& [\alpha_1 \, B_\Gamma^T] \begin{bmatrix}
	(\alpha_1^T\alpha_1)^{-1} & \\
	& (B_{\Gamma} B_{\Gamma}^T)^{-1}
	\end{bmatrix}  
	\begin{bmatrix}
	\alpha_1^T \\
	B_{\Gamma}
	\end{bmatrix}\\
	={}& \frac{1}{\alpha_1^T \alpha_1}\alpha_1 \alpha_1^T + B_{\Gamma}^T \big( B_\Gamma B_\Gamma^T\big)^{-1} B_\Gamma.
	\end{align*}
Then, \eqref{eq:Scomputation} follows directly.
\end{proof}
Lemma \ref{lem:Vcomputation} asserts that an element in $\cV(Pb)$ can be easily obtained by performing an additional rank one update to the corresponding matrix $H$. We further note from the discussions in Subsection \ref{subsec:hspic} that the matrix $H$ can be decomposed into the summation of a diagonal matrix and a low rank matrix. Hence, the elements in $\cV(Pb)$ can be constructed in low computational costs and so are the elements in $\cS(b)$ as the corresponding matrix $P$ is nothing but a signed permutation matrix.

\section{Numerical experiments}
\label{sec:numerical}
In this section, we shall evaluate our algorithm for solving problem \eqref{prob:proj}, i.e., the projection onto the OWL1 norm ball. We compare our algorithm with the two best-known methods, i.e., the root-finding scheme in \cite{Zeng2014ordered} and the Davis' approach in \cite{damek2015algorithm} which consists of repeated partial sorting and averaging steps.  While the explanations of Davis' approach may require some new definitions and technical jargons, the root-finding scheme can be easily explained. One will see later that this approach is in fact closely related to our method.

We briefly review the root-finding scheme in \cite{Zeng2014ordered} for solving problem \eqref{prob:proj}. Define the optimal solution mapping $\beta$ as
\begin{equation*}\label{prob:pen}
\beta(\mu):= \argmin_x\left\{ \frac{1}{2} \norm{x - b}^2 
+ \mu \kappa_{\lambda}(x) \right\}, \quad \forall\, \mu \ge 0.
\end{equation*}
and the function $\varphi:\Re_+ \to \Re$ as
$ \varphi(\mu) = \kappa_{\lambda}(\beta(\mu)).$
Then, the root-finding algorithm aims at solving the following univariate nonlinear equation
\begin{equation}
\label{eq:eqroot} 
\varphi(\mu) = \tau,\quad \mu\in [0,\kappa_{\lambda}^\circ(b)], 
\end{equation}
where $\kappa_{\lambda}^\circ$ is the dual norm, or equivalently the polar function, of $\kappa_{\lambda}$, defined as:
\[ 
\kappa_{\lambda}^\circ (y) := \sup_{x} 
\left\{ \inprod{x}{y} \mid \kappa_{\lambda}(x)\le 1 \right\}, \quad \forall\, y\in\Re^n.
\]
From \cite[Lemma 5]{Zeng2014ordered}, we know that there exists a unique solution $\mu^*$ to equation \eqref{eq:eqroot}  on the interval $[0,\kappa_{\lambda}^\circ(b)]$ and $\beta(\mu^*) = \Pi_{\Delta_{\tau}}(b)$, i.e., $\beta(\mu^*)$ solves problem \eqref{prob:proj}. In \cite{Zeng2014ordered}, equation \eqref{eq:eqroot} is solved via the Dekker–Brent method \cite{Dekker1969finding,Brent1973algorithms} and the evaluation of $\beta(\mu)$ for any given $\mu$ relies again on the PAVA.   
From the discussions in \cite{Zeng2014ordered} and Proposition \ref{prop:reform}, we know under certain assumption that equations \eqref{eq:eqroot} and \eqref{prob:first} are in fact optimality conditions associated with equivalent optimization problems. In particular, one may view equation \eqref{eq:eqroot} as a reparameterization of equation \eqref{prob:first}. Hence, the comparisons presented in this section between our method and the root-finding approach can also be regarded as the comparisons between using the semismooth newton method and the Dekker–Brent method for solving the univariate equation \eqref{prob:first}.

We implemented our Algorithm {\sc Ssn} and the root-finding approach \cite{Zeng2014ordered} in {\sc Matlab}.  A highly efficient implementation of the PAVA\footnote{\url{https://statweb.stanford.edu/~candes/software/SortedL1/software.html}} provided in \cite{Bogdan2015Slope} is used to compute $\Pi_C(d)$ for given $d\in\Re^n$ in {\sc Ssn} and to compute $\beta(\mu)$ for given $\mu\ge 0$ in the root-finding approach. As recommended in \cite{Zeng2014ordered}, the highly optimized implementation of the Dekker-Brent method -- the {\sc Matlab} built-in function {\tt fzero} with default parameter values is used for computing the solution to \eqref{eq:eqroot}. Meanwhile, for the Davis' approach, we use the author's {\sc Matlab} mex rapper of his C++ implementation\footnote{\url{https://people.orie.cornell.edu/dsd95/OWLBall.html}}. We also note in order to compute $\Pi_{\Delta_{\tau}}(b)$ for given $\lambda \in C$ and $b\in\Re^n$, all three tested algorithms employ the preprocessing step on $b$ to obtain the sorted and nonnegative input $|b|^{\downarrow}$ and also the postprocessing step to recover $\Pi_{\Delta_{\tau}}(b)$. See Proposition \ref{prop:reform} in the current paper, \cite[Fig. 5.]{Zeng2014ordered} and \cite[Algorithm 1]{damek2015algorithm}.
All our computational results are obtained by running {\sc Matlab} (version 9.5) on a desktop (4-core, Intel Core i7-9700K at 3.60GHz, 16 G RAM).

For our Algorithm {\sc Ssn}, we measure the accuracy of an obtained approximate solution $\tilde y$ for problem \eqref{eq:d} with given data $\lambda, b\in C$ and $\tau >0$ by using the relative residual of the optimality equation \eqref{eq:dphi} as follows:
\[
\eta = \frac{|\inprod{\Pi_C(\tilde y \lambda + b)}{\lambda} - \tau|}{1 + \tau}.
\] 
For a given tolerance $\epsilon >0$, we will stop Algorithm {\sc Ssn} when $\eta < \epsilon$. For all the tests in this section, we set $\epsilon = 10^{-12}$ to demonstrate the finite termination property of Algorithm {\sc Ssn}. In our numerical experiments, it is observed that all three algorithms output almost the same approximate optimal objective value, i.e., the relative difference of these values between any two algorithms is around $10^{-13}$ to $10^{-16}$. Hence, we believe all the algorithms find the ``optimal'' solution and compare only the wall-clock time of each methods.

We test the algorithms with simulated data. In this case, the vector $b\in \Re^n$ is set to be a random vector whose entries are i.i.d. random Gaussian numbers of mean $0$ and std. dev. $\sigma$ with $\sigma \in \left\{10^{-3}, 1, 10^3 \right\}$. Meanwhile, the vector $\lambda \in C$ is obtained by sorting the absolute value of a randomly generated vector in $\Re^n$ using the {\sc Matlab} command:
{\tt lambda=sort(abs(randn(n,1)),'descend')}. To ensure the blank assumption $\kappa_{\lambda}(b) >\tau$, we set the parameter 
\begin{equation*}
\label{eq:tau}
\tau = \beta \kappa_{\lambda}(b)
\end{equation*}
with $\beta$ choosing from $\{ 10^{-3}, 10^{-2}, 10^{-1},0.5,0.8 \}$.
In our tests, we vary $n$ from $10^6$ to $10^8$, i.e., we test algorithms with large-scale problems. 

In Table \ref{table:rand1}, we present our numerical results. The reported results are averaged over $10^2$ (for $n=10^6$ and $n=10^7$) and $10$ (for $n = 10^8$) random realizations.  In the table, the first two columns give parameter $\beta$ and problem dimension $n$.  The number of averaged iterations ``iter.'' and the relative KKT residual $\eta$ of Algorithm {\sc Ssn}, and computation times (in seconds) are listed in the last fifteen columns. In all cases, the best time is in bold. As one can observe, for all the experiments, Algorithm {\sc Ssn} performs best among all three tested algorithms. We note that for almost all the test instances, Algorithm {\sc Ssn} only needs $3$ to $4$ iterations to obtain highly accurate solutions with the relative optimality residual $\eta$ ranging from $10^{-14}$ to $10^{-16}$. These results clearly demonstrate the nice convergence properties, e.g., the quadratic convergence and the finite termination properties, of Algorithm {\sc Ssn}. Careful comparisons show that our Algorithm {\sc Ssn} can be 2 times faster than the root-finding approach and 70 times faster than Davis' approach. Moreover, these results also emphasize that the computational time of our algorithm scales linearly with respect to $n$.

Overall, one can safely conclude that our algorithm {\sc Ssn} is robust and highly efficient for solving projection problems over the OWL1 norm ball. We also note the good performance of the root-finding approach. A possible guess is that the root-finding approach may also enjoy the fast local convergence and the finite termination properties which will be left as a further research topic.

\begin{landscape}
\begin{small}
\begin{center}
	\begin{longtable}{| c | r |  r | r| r| r|r|}
		\caption{The performance of {\sc Ssn}, root-finding approach, and Davis' approach on the  projection problem \eqref{prob:proj}. In the table, $n$ is the dimension of $b$; $\sigma$ is the std. dev. of entries of $b$; $\beta$ is the parameters associated with $\tau$; ``a'' stands for {\sc Ssn}; ``b'' stands for the root-finding approach; ``c'' stands for Davis' approach.}\label{table:rand1}
		\\
		\hline
		\mc{1}{|c|}{} & \mc{2}{|c|}{ }  &\mc{2}{|c|}{ } &\mc{2}{|c|}{ }\\[-5pt]
		\mc{1}{|c|}{} & \mc{2}{|c|}{$\sigma = 10^{-3}$}  &\mc{2}{|c|}{$\sigma = 1$} &\mc{2}{|c|}{$\sigma = 10^{3}$}\\[2pt] \hline
	\mc{1}{|c|}{} &\mc{1}{|c|}{}&\mc{1}{c|}{time}&\mc{1}{|c|}{}&\mc{1}{c|}{time}&\mc{1}{|c|}{}&\mc{1}{c|}{time}
\\ [2pt]\cline{3-3} \cline{5-5} \cline{7-7}
\mc{1}{|c|}{$\beta$ ; $n$} &\mc{1}{|c|}{$\eta$ ; iter.}&\mc{1}{c|}{a $|$ b $|$ c}&\mc{1}{|c|}{$\eta$ ; iter.}&\mc{1}{c|}{a $|$ b $|$ c}&\mc{1}{|c|}{$\eta$ ; iter.}&\mc{1}{c|}{a $|$ b $|$ c}
\\ [2pt]\hline
		\endhead
1e-03 $;$ $10^6$	 &3.3e-14  $;$  4.3 	 &{\bf 0.1 } $|$ 0.2  $|$ 2.0 	 &3.2e-14  $;$  4.3 	 &{\bf 0.1 } $|$ 0.2  $|$ 2.0 	 &2.6e-14  $;$  4.3 	 &{\bf 0.1 } $|$ 0.2  $|$ 2.0 \\[2pt] 
\hline 
1e-02 $;$ $10^6$	 &6.2e-14  $;$  3.7 	 &{\bf 0.1 } $|$ 0.2  $|$ 2.0 	 &8.7e-14  $;$  3.7 	 &{\bf 0.1 } $|$ 0.2  $|$ 2.0 	 &6.4e-14  $;$  3.8 	 &{\bf 0.1 } $|$ 0.2  $|$ 2.0 \\[2pt] 
\hline 
1e-01 $;$ $10^6$	 &4.8e-16  $;$  3.0 	 &{\bf 0.1 } $|$ 0.2  $|$ 2.0 	 &3.8e-16  $;$  3.0 	 &{\bf 0.1 } $|$ 0.2  $|$ 2.1 	 &4.3e-16  $;$  3.0 	 &{\bf 0.1 } $|$ 0.2  $|$ 2.1 \\[2pt] 
\hline 
5e-01 $;$ $10^6$	 &3.3e-16  $;$  3.0 	 &{\bf 0.1 } $|$ 0.2  $|$ 1.8 	 &3.2e-16  $;$  3.0 	 &{\bf 0.1 } $|$ 0.2  $|$ 1.8 	 &2.9e-16  $;$  3.0 	 &{\bf 0.1 } $|$ 0.2  $|$ 1.8 \\[2pt] 
\hline 
8e-01 $;$ $10^6$	 &3.5e-16  $;$  3.0 	 &{\bf 0.1 } $|$ 0.1  $|$ 1.3 	 &3.4e-16  $;$  3.0 	 &{\bf 0.1 } $|$ 0.1  $|$ 1.3 	 &3.2e-16  $;$  3.0 	 &{\bf 0.1 } $|$ 0.2  $|$ 1.3 \\[2pt] 
\hline 
1e-03 $;$ $10^7$	 &4.8e-14  $;$  4.0 	 &{\bf 1.2 } $|$ 2.5  $|$ 34.1 	 &2.7e-14  $;$  4.0 	 &{\bf 1.2 } $|$ 2.5  $|$ 34.1 	 &3.0e-14  $;$  4.0 	 &{\bf 1.2 } $|$ 2.5  $|$ 34.1 \\[2pt] 
\hline 
1e-02 $;$ $10^7$	 &3.3e-14  $;$  3.0 	 &{\bf 1.0 } $|$ 2.0  $|$ 33.8 	 &4.7e-14  $;$  3.0 	 &{\bf 1.0 } $|$ 2.1  $|$ 33.8 	 &1.8e-14  $;$  3.0 	 &{\bf 1.0 } $|$ 2.1  $|$ 33.8 \\[2pt] 
\hline 
1e-01 $;$ $10^7$	 &8.0e-16  $;$  3.0 	 &{\bf 1.1 } $|$ 1.8  $|$ 33.8 	 &1.0e-15  $;$  3.0 	 &{\bf 1.1 } $|$ 1.9  $|$ 33.8 	 &9.4e-16  $;$  3.0 	 &{\bf 1.1 } $|$ 1.9  $|$ 33.7 \\[2pt] 
\hline 
5e-01 $;$ $10^7$	 &1.1e-15  $;$  3.0 	 &{\bf 1.2 } $|$ 1.7  $|$ 29.0 	 &1.0e-15  $;$  3.0 	 &{\bf 1.2 } $|$ 1.8  $|$ 29.0 	 &9.7e-16  $;$  3.0 	 &{\bf 1.2 } $|$ 1.8  $|$ 29.0 \\[2pt] 
\hline 
8e-01 $;$ $10^7$	 &7.3e-16  $;$  3.0 	 &{\bf 1.1 } $|$ 1.5  $|$ 21.4 	 &7.8e-16  $;$  3.0 	 &{\bf 1.1 } $|$ 1.5  $|$ 21.4 	 &9.3e-16  $;$  3.0 	 &{\bf 1.1 } $|$ 1.6  $|$ 21.4 \\[2pt] 
\hline 
1e-03 $;$ $10^8$	 &8.5e-14  $;$  3.9 	 &{\bf 12.2 } $|$ 24.2  $|$ 717.9 	 &8.4e-14  $;$  3.9 	 &{\bf 12.2 } $|$ 24.0  $|$ 717.9 	 &7.6e-14  $;$  3.9 	 &{\bf 12.3 } $|$ 24.2  $|$ 718.3 \\[2pt] 
\hline 
1e-02 $;$ $10^8$	 &5.4e-15  $;$  3.0 	 &{\bf 10.5 } $|$ 20.2  $|$ 743.8 	 &3.4e-15  $;$  3.0 	 &{\bf 10.5 } $|$ 20.4  $|$ 743.7 	 &3.7e-15  $;$  3.0 	 &{\bf 10.4 } $|$ 20.6  $|$ 756.6 \\[2pt] 
\hline 
1e-01 $;$ $10^8$	 &1.8e-15  $;$  3.0 	 &{\bf 11.3 } $|$ 17.3  $|$ 752.8 	 &3.1e-15  $;$  3.0 	 &{\bf 11.2 } $|$ 17.8  $|$ 764.7 	 &2.8e-15  $;$  3.0 	 &{\bf 11.2 } $|$ 17.8  $|$ 743.6 \\[2pt] 
\hline 
5e-01 $;$ $10^8$	 &3.2e-15  $;$  3.0 	 &{\bf 12.9 } $|$ 18.0  $|$ 700.9 	 &3.9e-15  $;$  3.0 	 &{\bf 12.8 } $|$ 18.7  $|$ 708.0 	 &3.0e-15  $;$  3.0 	 &{\bf 12.8 } $|$ 19.3  $|$ 711.7 \\[2pt] 
\hline 
8e-01 $;$ $10^8$	 &8.8e-14  $;$  2.9 	 &{\bf 12.1 } $|$ 15.9  $|$ 591.0 	 &2.9e-15  $;$  3.0 	 &{\bf 12.9 } $|$ 16.2  $|$ 590.1 	 &3.1e-15  $;$  3.0 	 &{\bf 12.2 } $|$ 17.3  $|$ 574.9 \\[2pt] 
\hline 
	\end{longtable}
\end{center}
\end{small}
\end{landscape}

\section{Conclusion}
In this paper, we introduced an finitely terminated yet efficient semismooth Newton method to compute the projector over the OWL1 norm ball. Numerical experiments demonstrate the efficiency of the algorithm. We also conducted a thorough first order variational analysis on the projector $\Pi_{\Delta_{\tau}}$. In particular, the generalized Jacobian associated with $\Pi_{\Delta_{\tau}}$ is given and its critical sparse plus low rank structure is uncovered. We believe that these results are critical for designing efficient and robust algorithms for solving large-scale general OWL1 norm constraint problems which will be considered in our future work.

\section{Appendix}
To start the proof for Proposition \ref{prop:reform}, we need the following two reduction lemmas. 
\begin{lemma}
	\label{lem:Pb}
For any $b\in \Re^n$, it holds that 
\begin{equation}
\label{prob:Pbproj}
P\Pi_{\Delta_{\tau}}(b) = \Pi_{\Delta_{\tau}}(Pb) = \argmin_x 
\left\{
\frac{1}{2}\norm{x - Pb}^2 \mid \kappa_{\lambda}(x)\le \tau
\right\}
\end{equation}
for all $P\in \Pi^s(b)$.
\end{lemma}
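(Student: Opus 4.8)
The plan is to prove only the first equality $P\Pi_{\Delta_\tau}(b) = \Pi_{\Delta_\tau}(Pb)$, since the second equality is merely the definition of $\Pi_{\Delta_\tau}$ evaluated at $Pb$. The whole argument rests on two invariance properties of signed permutation matrices. First, every $P \in {\bf \Pi}_{\bf n}^{\bf s}$ is orthogonal, so $P^T = P^{-1}$ is again a signed permutation matrix and $\norm{Pu} = \norm{u}$ for all $u$. Second, $\kappa_\lambda$ is invariant under signed permutations: since $Pu$ merely permutes the entries of $u$ up to sign flips, we have $|Pu|^{\downarrow} = |u|^{\downarrow}$ and hence $\kappa_\lambda(Pu) = \kappa_\lambda(u)$; in particular $P$ maps $\Delta_\tau$ onto itself.

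Writing $x^* := \Pi_{\Delta_\tau}(b)$, which is well defined and unique because $\frac{1}{2}\norm{\cdot - b}^2$ is strongly convex and $\Delta_\tau$ is closed and convex, I would first check that $Px^*$ is feasible for the problem defining $\Pi_{\Delta_\tau}(Pb)$, since $\kappa_\lambda(Px^*) = \kappa_\lambda(x^*) \le \tau$. Next, for an arbitrary feasible $x$ (that is, $\kappa_\lambda(x) \le \tau$), the change of variables $x \mapsto P^T x$ preserves feasibility because $\kappa_\lambda(P^T x) = \kappa_\lambda(x) \le \tau$, and orthogonality gives $\frac{1}{2}\norm{x - Pb}^2 = \frac{1}{2}\norm{P^T x - b}^2$. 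Since $x^*$ minimizes $\frac{1}{2}\norm{\cdot - b}^2$ over $\Delta_\tau$ and $P^T x \in \Delta_\tau$, this yields $\frac{1}{2}\norm{x^* - b}^2 \le \frac{1}{2}\norm{P^T x - b}^2 = \frac{1}{2}\norm{x - Pb}^2$. Invoking orthogonality once more, $\frac{1}{2}\norm{Px^* - Pb}^2 = \frac{1}{2}\norm{x^* - b}^2 \le \frac{1}{2}\norm{x - Pb}^2$, and as $x$ was arbitrary feasible, $Px^*$ is the unique minimizer, i.e. $Px^* = \Pi_{\Delta_\tau}(Pb)$.

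The argument is essentially routine and I do not anticipate a serious obstacle. The only point requiring care is making the two invariance facts precise — that $P$ is orthogonal with $P^T$ again in ${\bf \Pi}_{\bf n}^{\bf s}$, and that $\kappa_\lambda \circ P = \kappa_\lambda$ — after which the symmetric change-of-variables estimate closes the proof in one line. One could equivalently phrase this as the standard principle that a Euclidean projector commutes with any orthogonal map under which the constraint set is invariant, but spelling out the two inequalities explicitly keeps the lemma self-contained.
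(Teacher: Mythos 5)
Your proposal is correct and follows essentially the same route as the paper: establish feasibility of $P\Pi_{\Delta_\tau}(b)$ via the signed-permutation invariance of $\kappa_\lambda$, then use orthogonality of $P$ and the change of variables $x\mapsto P^T x$ to compare objective values against the optimality of $\Pi_{\Delta_\tau}(b)$. You are in fact slightly more careful than the paper in explicitly noting that $P^T x$ remains feasible, a step the paper's proof uses implicitly.
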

\begin{proof}
Recall the definition of $\Pi^s(b)$ in \eqref{eq:Pmatrix}, we know that $Pb = |b|^\downarrow$ for any $P\in \Pi^s(b)$. Hence, $\kappa_{\lambda}(P\Pi_{\Delta_{\tau}}(b)) = \inprod{|\Pi_{\Delta_{\tau}}(b)|^\downarrow}{\lambda} = \kappa_{\lambda}(\Pi_{\Delta_{\tau}}(b)) \le \tau$, i.e., $P\Pi_{\Delta_{\tau}}(b)$ is a feasible solution to problem \eqref{prob:Pbproj}.

For any $P\in \Pi^s(b)$ and $y\in\Re^n$ satisfying $\kappa_{\lambda}(y)\le \tau$, we have that 
\[ \frac{1}{2}\norm{y-Pb}^2 = \frac{1}{2}
\norm{P(P^Ty - b)}^2 = \frac{1}{2}\norm{P^Ty-b}^2 
\ge \frac{1}{2}\norm{\Pi_{\Delta_{\tau}}(b) - b}^2 
= \frac{1}{2}\norm{P\Pi_{\Delta_{\tau}}(b) - Pb}^2, \]
i.e., $P\Pi_{\Delta_{\tau}}(b)$ is an optimal solution to problem \eqref{prob:Pbproj} for all $P\in \Pi^s(b)$.
\end{proof}
\begin{lemma}
	\label{lem:binC}
Suppose that $\kappa_{\lambda}(b) > \tau$. Then, it holds that $\Pi_{\Delta_{\tau}}(b) = \Pi_{C_{\lambda}^\tau}(b)$ for all $b\in C$.
\end{lemma}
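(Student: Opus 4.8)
The plan is to exploit the fact that on the cone $C$ the OWL1 norm collapses to a linear functional: for any $x\in C$ one has $|x|^{\downarrow}=x$, so $\kappa_{\lambda}(x)=\inprod{|x|^{\downarrow}}{\lambda}=\inprod{\lambda}{x}$. Consequently the feasible set of \eqref{prob:newp}, namely $\{x\in C\mid \inprod{\lambda}{x}=\tau\}$, is contained in the OWL1 ball $\Delta_{\tau}$. The strategy is therefore to show that the OWL1 projection $\bar x:=\Pi_{\Delta_{\tau}}(b)$ is itself feasible for \eqref{prob:newp}, and then to transfer optimality between the two strictly convex problems using uniqueness of their minimizers.

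First I would prove that $\bar x\in C$ by comparing $\bar x$ with its sorted absolute-value rearrangement $|\bar x|^{\downarrow}$. Since $\kappa_{\lambda}$ depends only on $|\cdot|^{\downarrow}$, we have $\kappa_{\lambda}(|\bar x|^{\downarrow})=\kappa_{\lambda}(\bar x)\le\tau$, so $|\bar x|^{\downarrow}$ is feasible for \eqref{prob:proj}. Because rearranging and taking absolute values preserves the Euclidean norm, $\norm{|\bar x|^{\downarrow}}=\norm{\bar x}$, while using $b\ge 0$ together with the rearrangement inequality (both legitimate since $b\in C$ is sorted and nonnegative) yields $\inprod{b}{|\bar x|^{\downarrow}}\ge\inprod{b}{\bar x}$. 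Expanding $\norm{b-\,\cdot\,}^2$ then gives $\norm{b-|\bar x|^{\downarrow}}^2\le\norm{b-\bar x}^2$. Since the projection onto the closed convex set $\Delta_{\tau}$ is unique, this forces $\bar x=|\bar x|^{\downarrow}$, hence $\bar x\in C$.

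Next I would argue that the constraint is active. Because $\kappa_{\lambda}(b)>\tau$ by assumption, the point $b$ lies outside the closed convex set $\Delta_{\tau}$, and the metric projection of an exterior point must lie on the boundary; otherwise the optimality condition $\bar x-b=0$ would hold and contradict $b\notin\Delta_{\tau}$. Thus $\kappa_{\lambda}(\bar x)=\tau$, and combined with $\bar x\in C$ this reads $\inprod{\lambda}{\bar x}=\tau$. Therefore $\bar x$ is a feasible point of problem \eqref{prob:newp}.

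Finally I would close the loop. Since the feasible set of \eqref{prob:newp} is contained in $\Delta_{\tau}$, minimizing $\frac{1}{2}\norm{x-b}^2$ over the smaller set cannot yield a value below that achieved by $\Pi_{\Delta_{\tau}}(b)$; yet $\bar x$ is feasible for \eqref{prob:newp} and attains exactly the optimal value of the OWL1 projection. Hence $\bar x$ solves \eqref{prob:newp}, and by strict convexity of the objective this solution is unique, giving $x_{\lambda}(b)=\bar x=\Pi_{\Delta_{\tau}}(b)$. The main obstacle is the first step, namely establishing that the projection stays inside $C$; this is where the sort-invariance of $\kappa_{\lambda}$ together with the rearrangement inequality (crucially using $b\in C$) does the essential work, after which the remaining steps are routine consequences of convexity and uniqueness.
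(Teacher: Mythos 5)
Your proof is correct and follows the same overall skeleton as the paper's: establish that $\bar x := \Pi_{\Delta_\tau}(b)$ lies in $C$, deduce that the constraint is active so that $\bar x$ is feasible for \eqref{prob:newp}, and conclude by a value comparison plus uniqueness of the minimizers. The difference is in how membership in $C$ is proved. The paper runs two separate local perturbation arguments: it flips the sign of a hypothetical negative entry, and then swaps a hypothetical out-of-order pair, each time producing a distinct feasible point with no worse objective value and contradicting uniqueness of the projection. You instead compare $\bar x$ with $|\bar x|^{\downarrow}$ in a single global step, using sort-invariance of $\kappa_\lambda$ for feasibility, invariance of the Euclidean norm under rearrangement and absolute value, and the Hardy--Littlewood rearrangement inequality together with $b\in C$ (sorted and nonnegative) to obtain $\inprod{b}{|\bar x|^{\downarrow}}\ge\inprod{b}{\bar x}$, hence $\norm{b-|\bar x|^{\downarrow}}\le\norm{b-\bar x}$ and, by uniqueness, $\bar x=|\bar x|^{\downarrow}\in C$. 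The two routes are morally equivalent --- the paper's sign-flip and swap are exactly the elementary moves underlying the rearrangement inequality --- but your global version is more compact and dispenses with the case analysis. Your justification that the constraint is active (an exterior point cannot project to an interior point of $\Delta_\tau$, else $\bar x=b$) is also slightly more explicit than the paper's, which simply asserts $\kappa_\lambda(\bar x)=\tau$ from $\bar x\in C$ and $\kappa_\lambda(b)>\tau$.
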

\begin{proof}
Given any $b\in C$, we first show that $(\Pi_{\Delta_{\tau}}(b))_i\ge 0$ for all $1\le i \le n$. Suppose otherwise that there exists $i_0\in \{1,\ldots,n\}$ such that $(\Pi_{\Delta_{\tau}}(b))_{i_0} < 0$. Let $u\in\Re^n$ be the vector such that $u_{i_0} = -(\Pi_{\Delta_{\tau}}(b))_{i_0}$ and $u_i = (\Pi_{\Delta_{\tau}}(b))_i$ for all $i\neq i_0$. Then, it holds that
$u\neq \Pi_{\Delta_{\tau}}(b)$ and $\kappa_\lambda(\Pi_{\Delta_{\tau}}(b)) = \kappa_{\lambda}(u)$. Meanwhile, we have that 
\[
\frac{1}{2}\norm{\Pi_{\Delta_{\tau}}(b) - b}^2 - \frac{1}{2}\norm{u - b}^2 = 2u_{i_0} b_{i_0} \ge 0,
\]
which contradicts to the fact that $\Pi_{\Delta_{\tau}}(b)$ is the unique optimizer of problem \eqref{prob:proj}. Thus, $(\Pi_{\Delta_{\tau}}(b))_i\ge 0$ for all $1\le i \le n$.
	
Next, we show that for any $1\le i < j \le n$, it holds that $(\Pi_{\Delta_{\tau}}(b))_i \ge (\Pi_{\Delta_{\tau}}(b))_j$. Suppose otherwise that there exists $1\le i_0 < j_0\le n$ such that  $(\Pi_{\Delta_{\tau}}(b))_{i_0} < (\Pi_{\Delta_{\tau}}(b))_{j_0}$. Then, we can construct a new vector $u\in\Re^n$ by switching the $i_0$ and $j_0$ entries of $\Pi_{\Delta_{\tau}}(b)$, i.e., $u_{i_0} = (\Pi_{\Delta_{\tau}}(b))_{j_0}$, $u_{j_0} = (\Pi_{\Delta_{\tau}}(b))_{i_0}$ and $u_i = (\Pi_{\Delta_{\tau}}(b))_{i}$ for all $i \in \left\{1,\ldots,n \right\}/\{i_0,j_0\}$. Then, $u\neq \Pi_{\Delta_{\tau}}(b)$ and 
$\kappa_{\lambda}(u) = \kappa_{\lambda}(\Pi_{\Delta_{\tau}}(b))\le \tau$. Similarly, one can show by some simple calculations that
\[
\frac{1}{2}\norm{\Pi_{\Delta_{\tau}}(b) - b}^2 - \frac{1}{2}\norm{u - b}^2 \ge 0.
\]
We again arrive at a contradiction. Hence, we have $\Pi_{\Delta_{\tau}}(b)\in C$.

Since $\Pi_{\Delta_{\tau}}(b)\in C$ and $\kappa(b) > \tau$, we know that $\inprod{\lambda}{\Pi_{\Delta_{\tau}}(b)} = \kappa_{\lambda}(\Pi_{\Delta_{\tau}}(b)) = \tau$, i.e., $\Pi_{\Delta_{\tau}}(b)$ is a feasible solution to problem \eqref{prob:newp}. Hence, 
\[
\frac{1}{2}\norm{\Pi_{C_{\lambda}^\tau}(b) - b}^2 \le \frac{1}{2}\norm{\Pi_{\Delta_{\tau}}(b) - b}^2 \le \frac{1}{2}\norm{\Pi_{C_{\lambda}^\tau}(b) - b}^2,
\]
where the last inequality follows from the fact that $\Pi_{C_{\lambda}^\tau}(b)$ is a feasible solution to problem \eqref{prob:proj}. Hence, the uniqueness of the optimal solution of problem \eqref{prob:newp} implies that $\Pi_{\Delta_{\tau}}(b) = \Pi_{C_{\lambda}^\tau}(b)$.
\end{proof}

\noindent{\bf Proof of Proposition \ref{prop:reform}}: 
For any $b\in \Re^n$ satisfying $\kappa_{\lambda}(b) > \tau$ and $P\in \Pi^s(b)$, we know from Lemmas \ref{lem:Pb} and \ref{lem:binC} that 
\[P\Pi_{\Delta_{\tau}}(b) = \Pi_{\Delta_{\tau}}(Pb) = \Pi_{C_{\lambda}^\tau}(Pb).
\] 
Since $P$ is a signed permutation matrix, we further have
$\Pi_{\Delta_{\tau}}(b) = P^T  \Pi_{C_{\lambda}^\tau}(Pb)$. \qed

\bibliographystyle{siam}

\end{document}